\numberwithin{equation}{section}
\newtheorem{theorem}{Theorem}[section]
\newtheorem{lemma}[theorem]{Lemma}
\newtheorem{remark}[theorem]{remark}
\newtheorem{proposition}[theorem]{Proposition}
\theoremstyle{definition}
\newtheorem{definition}[theorem]{Definition}
\newtheorem{example}[theorem]{Example}
\begin{document}
\setcounter{page}{1}
\title[Integral $K$-Operator Frames for $End_{\mathcal{A}}^{\ast}(\mathcal{H})$]{Integral $K$-Operator Frames for $End_{\mathcal{A}}^{\ast}(\mathcal{H})$}
\author[Hatim LABRIGUI and Samir KABBAJ]{Hatim LABRIGUI$^1$$^{*}$ and Samir KABBAJ$^1$ }
\address{$^{1}$Department of Mathematics, Ibn Tofail University, B.P. 133, Kenitra, Morocco}
\email{hlabrigui75@gmail.com; samkabbaj@yahoo.fr}
\date{
\newline \indent $^{*}$ Corresponding author}
\subjclass[2010]{Primary 42C15; Secondary 46L05}

\keywords{$K$-frame, $K$-operator frame, $C^{\ast}$-algebra, Hilbert $\mathcal{A}$-modules}

\date{...}
\maketitle

\begin{abstract}
	In this work, we introduce a new concept of integral $K$-operator frame for the set of all adjointable operators from Hilbert $C^{\ast}$-modules $\mathcal{H}$ to  it self noted $End_{\mathcal{A}}^{\ast}(\mathcal{H}) $.  We give some propertis relating some construction of integral $K$-operator frame and operators preserving  integral $K$-operator frame and we establish some new results.
\end{abstract}

\section{Introduction and preliminaries}

In 1952 Duffin and Schaefer \cite{Duf} have introduced the concept of frames in the study of nonharmonic Fourier series. Frames possess many nice properties which make them very useful in wavelet analysis, irregular sampling theory, signal processing and many other fields. The theory of frames has been generalized rapidly and various generalizations of frames have emerged in Hilbert spaces and Hilbert $C^{\ast}$-modules (see \cite{F4, LG, mjpaa, moi,moi1, r4, r04, r5, r7, r9, r8}).

 The concept of continuous frames has been defined by Ali, Antoine and Gazeau \cite{STAJP}. Gabardo and Han in \cite{14} called these kinds frames or frames associated with measurable spaces. For more details, the reader can refer to \cite{ARAN}, \cite{MR1} and \cite{MR2}.\\

In this paper, we introduce a new concept of integral $K$-operator frame for the set of all adjointable operators from Hilbert $C^{\ast}$-modules $\mathcal{H}$ to  $\mathcal{H}$ noted $End_{\mathcal{A}}^{\ast}(\mathcal{H}) $, is a generalization of continuous K-frames for Hilbert $C^{\ast}$-modules, and we establish some new results.

In what follows, we set $\mathcal{H}$ a separable Hilbert space, $End_{\mathcal{A}}^{\ast}(\mathcal{H}) $ the set of all adjointable operators from Hilbert $C^{\ast}$-modules $\mathcal{H}$ to  $\mathcal{H}$ and $(\Omega,\mu)$ a measure space with positive measure $\mu$.\\
Let $K,T \in End_{\mathcal{A}}^{\ast}(\mathcal{H})  $, if $TK=I$, then $T$ is called the left inverse of $K$, denoted by $K_{l}^{-1}$.\\
If $KT=I$, then $T$ is called the right inverse of $K$ and we write $K_{r}^{-1}=T$.\\
If $KT=TK=I$, then $T$ and $K$ are inverse of each other.\\
For a separable Hilbert space $H$ and a measurable space $(\Omega,\mu)$, we define,
\begin{equation*}
l^{2}(\Omega,\mathcal{H})=\{x_{\omega} \in \mathcal{H},\quad  \omega \in \Omega,\quad \left\|\int_{\Omega}\langle x_{\omega},x_{\omega}\rangle d\mu(\omega)\right\| < \infty  \}.
\end{equation*}
For any $x=(x_{\omega})_{\omega \in \Omega}$ and $y=(y_{\omega})_{\omega \in \Omega}$, the inner product on $l^{2}(\Omega,H)$ is defined by, 
\begin{equation*}
\langle x,y\rangle = \int_{\Omega}\langle x_{\omega},y_{\omega}\rangle d\mu(\omega).
\end{equation*}
The norme is defined by $\|x\|=\langle x,x\rangle^{\frac{1}{2}}$.

In this section we briefly recall the definitions and basic properties of $C^{\ast}$-algebra, Hilbert $\mathcal{A}$-modules and frame in Hilbert $\mathcal{A}$-modules. For information about frames in Hilbert spaces we refer to \cite{Ch}. Our references for $C^{\ast}$-algebras are \cite{{Dav},{Con}}.\\
For a $C^{\ast}$-algebra $\mathcal{A}$, if $a\in\mathcal{A}$ is positive we write $a\geq 0$ and $\mathcal{A}^{+}$ denotes the set of positive elements of $\mathcal{A}$.
\begin{definition}\cite{Kap}.
	Let $ \mathcal{A} $ be a unital $C^{\ast}$-algebra and $\mathcal{H}$ be a left $ \mathcal{A} $-module, such that the linear structures of $\mathcal{A}$ and $ \mathcal{H} $ are compatible. $\mathcal{H}$ is a pre-Hilbert $\mathcal{A}$-module if $\mathcal{H}$ is equipped with an $\mathcal{A}$-valued inner product $\langle.,.\rangle :\mathcal{H}\times\mathcal{H}\rightarrow\mathcal{A}$, such that is sesquilinear, positive definite and respects the module action. In the other words,
	\begin{itemize}
		\item [(i)] $ \langle x,x\rangle_{\mathcal{A}}\geq0 $ for all $ x\in\mathcal{H} $ and $ \langle x,x\rangle_{\mathcal{A}}=0$ if and only if $x=0$.
		\item [(ii)] $\langle ax+y,z\rangle_{\mathcal{A}}=a\langle x,y\rangle_{\mathcal{A}}+\langle y,z\rangle_{\mathcal{A}}$ for all $a\in\mathcal{A}$ and $x,y,z\in\mathcal{H}$.
		\item[(iii)] $ \langle x,y\rangle_{\mathcal{A}}=\langle y,x\rangle_{\mathcal{A}}^{\ast} $ for all $x,y\in\mathcal{H}$.
	\end{itemize}	 
\end{definition}
For $x\in\mathcal{H}, $ we define $||x||=||\langle x,x\rangle||^{\frac{1}{2}}$. If $\mathcal{H}$ is complete with $||.||$, it is called a Hilbert $\mathcal{A}$-module or a Hilbert $C^{\ast}$-module over $\mathcal{A}$. For every $a$ in the  $C^{\ast}$-algebra $\mathcal{A}$, we have $|a|=(a^{\ast}a)^{\frac{1}{2}}$ and the $\mathcal{A}$-valued norm on $\mathcal{H}$ is defined by $|x|=\langle x, x\rangle^{\frac{1}{2}}$ for $x\in\mathcal{H}$.

Let $\mathcal{H}$ and $\mathcal{K}$ be two Hilbert $\mathcal{A}$-modules. A map $T:\mathcal{H}\rightarrow\mathcal{K}$ is said to be adjointable if there exists a map $T^{\ast}:\mathcal{K}\rightarrow\mathcal{H}$ such that $\langle Tx,y\rangle_{\mathcal{A}}=\langle x,T^{\ast}y\rangle_{\mathcal{A}}$ for all $x\in\mathcal{H}$ and $y\in\mathcal{K}$.

We also reserve the notation $End_{\mathcal{A}}^{\ast}(\mathcal{H},\mathcal{K})$ for the set of all adjointable operators from $\mathcal{H}$ to $\mathcal{K}$ and $End_{\mathcal{A}}^{\ast}(\mathcal{H},\mathcal{H})$ is abbreviated to $End_{\mathcal{A}}^{\ast}(\mathcal{H})$.
\begin{definition}\cite{BA}
	Let $ \mathcal{H} $ be a Hilbert $\mathcal{A}$-module over a unital $C^{\ast}$-algebra. A family $\{x_{i}\}_{i\in I}$ of elements of $\mathcal{H}$ is said to be a frame for $ \mathcal{H} $, if there
	exist two positive constants $A,B$ such that,
	\begin{equation}\label{eq1}
	A\langle x,x\rangle_{\mathcal{A}}\leq\sum_{i\in I}\langle x,x_{i}\rangle_{\mathcal{A}}\langle x_{i},x\rangle_{\mathcal{A}}\leq B\langle x,x\rangle_{\mathcal{A}}, \qquad x\in \mathcal{H}.
	\end{equation}
	The numbers $A$ and $B$ are called lower and upper bounds of the frame, respectively. If $A=B=\lambda$, the frame is called $\lambda$-tight. If $A = B = 1$, it is called a normalized tight frame or a Parseval frame. If the sum in the middle of \eqref{eq1} is convergent in norm, the frame is called standard. If only upper inequality of \eqref{eq1} holds, then $\{x_{i}\}_{i\in I}$ is called a Bessel sequence for $\mathcal{H}$.
\end{definition}
In \cite{LG}, L. Gavruta introduced $K$-frames to study atomic systems for operators in Hilbert spaces.
\begin{definition}\cite{Gav} Let $K\in End_{\mathcal{A}}^{\ast}(\mathcal{H})$. A family $\{x_{i}\}_{i\in I}$ of elements in a Hilbert $\mathcal{A}$-module $\mathcal{H}$ over a unital $C^{\ast}$-algebra is a $K$-frame for $ \mathcal{H} $, if there exist two positive constants $A$ and $B$, such that,
	\begin{equation}\label{11}
	A\langle K^{\ast}x,K^{\ast}x\rangle_{\mathcal{A}}\leq\sum_{i\in I}\langle x,x_{i}\rangle_{\mathcal{A}}\langle x_{i},x\rangle_{\mathcal{A}}\leq B\langle x,x\rangle_{\mathcal{A}}, \qquad x\in\mathcal{H}.
	\end{equation}
	The numbers $A$ and $B$ are called lower and upper bounds of the $K$-frame, respectively.
\end{definition}
The following lemmas will be used to prove our mains results
\begin{lemma} \label{l1} \cite{Pas}
	Let $\mathcal{H}$ be a Hilbert $\mathcal{A}$-module. If $T\in End_{\mathcal{A}}^{\ast}(\mathcal{H})$, then $$\langle Tx,Tx\rangle_{\mathcal{A}}\leq\|T\|^{2}\langle x,x\rangle_{\mathcal{A}},\qquad  \forall x\in\mathcal{H}.$$
\end{lemma}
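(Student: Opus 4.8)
The plan is to deduce the module-valued inequality from a single operator inequality inside the $C^{\ast}$-algebra $End_{\mathcal{A}}^{\ast}(\mathcal{H})$ itself, rather than arguing pointwise in $\mathcal{A}$. The starting observation is that, by adjointability, $\langle Tx,Tx\rangle_{\mathcal{A}}=\langle x,T^{\ast}Tx\rangle_{\mathcal{A}}$ for every $x\in\mathcal{H}$, so it suffices to control the single positive operator $S:=T^{\ast}T$. Since $S$ is self-adjoint and positive in $End_{\mathcal{A}}^{\ast}(\mathcal{H})$ with $\|S\|=\|T\|^{2}$, the goal reduces to showing that the element $\|T\|^{2}I-S$ is positive.

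First I would recall that $End_{\mathcal{A}}^{\ast}(\mathcal{H})$ is itself a unital $C^{\ast}$-algebra, so that continuous functional calculus and the usual spectral estimates are available. Because $S\geq 0$, its spectrum lies in $[0,\|S\|]=[0,\|T\|^{2}]$, whence the spectrum of $\|T\|^{2}I-S$ lies in $[0,\|T\|^{2}]\subset[0,\infty)$; in particular $\|T\|^{2}I-S\geq 0$. Being a positive element of a $C^{\ast}$-algebra, it admits a positive square root, i.e.\ there exists $R\in End_{\mathcal{A}}^{\ast}(\mathcal{H})$ with $R=R^{\ast}$ and $R^{\ast}R=\|T\|^{2}I-S$.

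The final step is a direct computation using adjointability and axiom (i) of the inner product. For any $x\in\mathcal{H}$,
\[
\langle Rx,Rx\rangle_{\mathcal{A}}=\langle x,R^{\ast}Rx\rangle_{\mathcal{A}}=\|T\|^{2}\langle x,x\rangle_{\mathcal{A}}-\langle x,Sx\rangle_{\mathcal{A}}=\|T\|^{2}\langle x,x\rangle_{\mathcal{A}}-\langle Tx,Tx\rangle_{\mathcal{A}}.
\]
Since the left-hand side is positive in $\mathcal{A}$ by the positive-definiteness of the inner product, the right-hand side is positive as well, which is exactly the claimed inequality $\langle Tx,Tx\rangle_{\mathcal{A}}\leq\|T\|^{2}\langle x,x\rangle_{\mathcal{A}}$.

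The step I expect to be the only genuine subtlety is the justification that $\|T\|^{2}I-S$ is a positive element: this rests on treating $End_{\mathcal{A}}^{\ast}(\mathcal{H})$ as a bona fide $C^{\ast}$-algebra and invoking the spectral fact that $\|a\|\,1-a\geq 0$ for every positive $a$, together with the existence of square roots of positive elements. One should also keep track of the left-module convention (the inner product here is $\mathcal{A}$-linear in the first variable), but since $\|T\|^{2}$ is a real scalar this causes no difficulty in the displayed identity.
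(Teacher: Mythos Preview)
Your proof is correct. The paper itself does not supply a proof of this lemma; it is merely stated with a citation to Paschke \cite{Pas}, so there is no in-paper argument to compare against. Your approach---reducing to the operator inequality $T^{\ast}T\leq\|T\|^{2}I$ in the $C^{\ast}$-algebra $End_{\mathcal{A}}^{\ast}(\mathcal{H})$, taking the positive square root of $\|T\|^{2}I-T^{\ast}T$, and reading off the module inequality from $\langle Rx,Rx\rangle_{\mathcal{A}}\geq 0$---is the standard one and matches how this fact is typically established in the literature.
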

\begin{lemma} \label{l2} \cite{Zha}
	Let $\mathcal{H}$ be a Hilbert $\mathcal{A}$-module over a $C^{\ast}$-algebra $\mathcal{A}$ and let $T, S$ two operators for $End_{\mathcal{A}}^{\ast}(\mathcal{H})$. If $Rang(S)$ is closed, then the following statements are equivalent:
	\begin{itemize}
		\item [(i)] $Rang(T)\subseteq Rang(S)$.
		\item [(ii)] $ TT^{\ast}\leq \lambda SS^{\ast}$ for some $\lambda>0$.
		\item [(iii)] There exists $Q\in End_{\mathcal{A}}^{\ast}(\mathcal{H})$ such that $T=SQ$.
	\end{itemize}
\end{lemma}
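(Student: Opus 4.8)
The plan is to prove the three statements equivalent by establishing the cycle $(i)\!\Leftarrow\!(iii)\Rightarrow(ii)\Rightarrow(i)$, with the closedness of $Rang(S)$ entering decisively only in the last two implications. Two of the steps are soft and use nothing beyond Lemma \ref{l1}; the whole weight of the argument rests on producing an orthogonal complement of $Rang(S)$ together with an adjointable pseudoinverse of $S$, which is exactly where the Hilbert $C^{\ast}$-module setting departs from the classical Hilbert space Douglas lemma.

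For $(iii)\Rightarrow(ii)$ I would simply substitute $T=SQ$, so that $T^{\ast}=Q^{\ast}S^{\ast}$, and estimate for every $x\in\mathcal{H}$,
$$\langle TT^{\ast}x,x\rangle_{\mathcal{A}}=\langle Q^{\ast}S^{\ast}x,Q^{\ast}S^{\ast}x\rangle_{\mathcal{A}}\leq\|Q^{\ast}\|^{2}\langle S^{\ast}x,S^{\ast}x\rangle_{\mathcal{A}}=\|Q\|^{2}\langle SS^{\ast}x,x\rangle_{\mathcal{A}},$$
where the inequality is Lemma \ref{l1} applied to $Q^{\ast}$ (using $\|Q^{\ast}\|=\|Q\|$). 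Taking $\lambda=\|Q\|^{2}$ gives $TT^{\ast}\leq\lambda SS^{\ast}$.

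For $(ii)\Rightarrow(i)$, since $Rang(S)$ is closed it is orthogonally complemented, so I would let $P\in End_{\mathcal{A}}^{\ast}(\mathcal{H})$ be the orthogonal projection onto $Rang(S)$; then $I-P$ is the projection onto $Rang(S)^{\perp}=Ker(S^{\ast})$. The hypothesis $TT^{\ast}\leq\lambda SS^{\ast}$ reads $\langle T^{\ast}x,T^{\ast}x\rangle_{\mathcal{A}}\leq\lambda\langle S^{\ast}x,S^{\ast}x\rangle_{\mathcal{A}}$ for all $x$. Replacing $x$ by $(I-P)x$ and using $S^{\ast}(I-P)=0$ forces $\langle T^{\ast}(I-P)x,T^{\ast}(I-P)x\rangle_{\mathcal{A}}\leq 0$, hence $T^{\ast}(I-P)=0$, i.e. $T^{\ast}=T^{\ast}P$. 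Passing to adjoints yields $T=PT$, so $Rang(T)\subseteq Rang(P)=Rang(S)$.

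Finally, for $(i)\Rightarrow(iii)$ I would invoke the standard fact that an adjointable operator with closed range on a Hilbert $C^{\ast}$-module admits a Moore--Penrose inverse $S^{\dagger}\in End_{\mathcal{A}}^{\ast}(\mathcal{H})$ with $SS^{\dagger}=P$, the projection onto $Rang(S)$. Since $Rang(T)\subseteq Rang(S)$ gives $PT=T$, setting $Q=S^{\dagger}T\in End_{\mathcal{A}}^{\ast}(\mathcal{H})$ produces $SQ=SS^{\dagger}T=PT=T$, which is $(iii)$. The main obstacle is precisely the structural input used in these last two steps: unlike in Hilbert space, a closed submodule need not be orthogonally complemented and a bounded module map need not be adjointable, so one must justify that the closed-range hypothesis yields both the orthogonal decomposition $\mathcal{H}=Rang(S)\oplus Ker(S^{\ast})$ and the adjointable pseudoinverse $S^{\dagger}$. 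Once that machinery is in place, the three implications are routine.
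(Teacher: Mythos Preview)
The paper does not supply its own proof of this lemma; it is simply quoted from \cite{Zha} and used as a black box, so there is no ``paper's proof'' to compare against. Your argument is the standard Douglas-type proof, adapted correctly to the Hilbert $C^{\ast}$-module setting, and the three implications you give are valid. Two small remarks: first, the cycle you announce at the outset, $(i)\Leftarrow(iii)\Rightarrow(ii)\Rightarrow(i)$, is not a cycle at all---presumably the first arrow is a typo for $\Rightarrow$, since what you actually prove is $(i)\Rightarrow(iii)\Rightarrow(ii)\Rightarrow(i)$. Second, in $(iii)\Rightarrow(ii)$ the choice $\lambda=\|Q\|^{2}$ fails to be strictly positive when $Q=0$; in that degenerate case $T=0$ and any $\lambda>0$ works, so the gap is trivial but worth a parenthetical remark. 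You are also right to flag that the substantive input is the orthogonal complementability of $Rang(S)$ and the existence of an adjointable Moore--Penrose inverse $S^{\dagger}$; both follow from the closed-range hypothesis for adjointable operators on Hilbert $C^{\ast}$-modules (see, e.g., Lance's monograph or indeed \cite{Zha}), and once those are granted the rest is routine.
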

\begin{lemma} \label{l3} \cite{Ali}.
	Let $\mathcal{H}$ and $\mathcal{K}$ be two Hilbert $\mathcal{A}$-modules and $T\in End^{\ast}(\mathcal{H},\mathcal{K})$.
	\begin{itemize}
		\item [(i)] If $T$ is injective and $T$ has closed range, then the adjointable map $T^{\ast}T$ is invertible and $$\|(T^{\ast}T)^{-1}\|^{-1}\leq T^{\ast}T\leq\|T\|^{2}.$$
		\item  [(ii)]	If $T$ is surjective, then the adjointable map $TT^{\ast}$ is invertible and $$\|(TT^{\ast})^{-1}\|^{-1}\leq TT^{\ast}\leq\|T\|^{2}.$$
	\end{itemize}	
\end{lemma}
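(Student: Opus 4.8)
The plan is to prove (i) in full and then obtain (ii) by applying (i) to $T^{\ast}$. Throughout I would use that $End_{\mathcal{A}}^{\ast}(\mathcal{H})$ is a unital $C^{\ast}$-algebra, so that positivity and spectral/functional-calculus arguments are available, and that $T^{\ast}T$ is automatically self-adjoint and positive.

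For (i), I would first record the upper bound, which is immediate from Lemma \ref{l1}: for every $x$ we have $\langle T^{\ast}Tx,x\rangle_{\mathcal{A}}=\langle Tx,Tx\rangle_{\mathcal{A}}\leq\|T\|^{2}\langle x,x\rangle_{\mathcal{A}}$, that is $T^{\ast}T\leq\|T\|^{2}$. The substance is the invertibility, which I would get by showing $T^{\ast}T$ is both injective and surjective. Injectivity is easy: if $T^{\ast}Tx=0$ then $\langle Tx,Tx\rangle_{\mathcal{A}}=\langle x,T^{\ast}Tx\rangle_{\mathcal{A}}=0$, hence $Tx=0$ and so $x=0$ by injectivity of $T$. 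Surjectivity is the delicate step, where the closed-range hypothesis is genuinely used: since $T$ has closed range, the structural theorem for adjointable operators with closed range provides the orthogonal decomposition $\mathcal{H}=\ker T\oplus\overline{Rang(T^{\ast})}$ together with the identity $Rang(T^{\ast}T)=Rang(T^{\ast})$ (both ranges closed). As $T$ is injective, $\ker T=\{0\}$, so $Rang(T^{\ast})=\mathcal{H}$ and therefore $Rang(T^{\ast}T)=\mathcal{H}$. Thus $T^{\ast}T$ is a bijective adjointable operator; since the inverse of a bijective adjointable operator is again adjointable (and self-adjoint here), $T^{\ast}T$ is invertible in $End_{\mathcal{A}}^{\ast}(\mathcal{H})$.

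For the lower bound I would set $S=T^{\ast}T$, now a positive invertible element of the $C^{\ast}$-algebra $End_{\mathcal{A}}^{\ast}(\mathcal{H})$, and argue spectrally. For a positive invertible element one has $\min\sigma(S)=\big(\max\sigma(S^{-1})\big)^{-1}=\|S^{-1}\|^{-1}$, so $\sigma(S)\subseteq[\,\|S^{-1}\|^{-1},\|S\|\,]$; applying functional calculus to $S-\|S^{-1}\|^{-1}I$ gives $S\geq\|S^{-1}\|^{-1}I$, which is exactly $\|(T^{\ast}T)^{-1}\|^{-1}\leq T^{\ast}T$.

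Finally, (ii) should follow by applying (i) to $T^{\ast}$. If $T$ is surjective then $Rang(T)=\mathcal{K}$ is closed, whence $T^{\ast}$ also has closed range, and $\ker T^{\ast}=Rang(T)^{\perp}=\mathcal{K}^{\perp}=\{0\}$, so $T^{\ast}$ is injective with closed range. Invoking (i) for $T^{\ast}$, together with $(T^{\ast})^{\ast}=T$ and $\|T^{\ast}\|=\|T\|$, yields that $TT^{\ast}=(T^{\ast})^{\ast}T^{\ast}$ is invertible and satisfies $\|(TT^{\ast})^{-1}\|^{-1}\leq TT^{\ast}\leq\|T\|^{2}$. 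I expect the main obstacle to be precisely the surjectivity step in (i): in Hilbert $C^{\ast}$-modules closed submodules need not be orthogonally complemented, so the identity $Rang(T^{\ast}T)=Rang(T^{\ast})$ and the orthogonal splitting of $\mathcal{H}$ cannot be taken for granted as in the Hilbert-space case and must be justified through the closed-range structure theorem rather than by a naive projection argument.
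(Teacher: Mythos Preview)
The paper does not prove this lemma at all: it is stated with the citation \cite{Ali} and no proof is given, so there is nothing in the paper to compare your argument against. Your proposal is a reasonable self-contained argument; the upper bound via Lemma~\ref{l1}, the injectivity of $T^{\ast}T$, the spectral lower bound $\|S^{-1}\|^{-1}\leq S$ for a positive invertible $S$, and the reduction of (ii) to (i) via $T^{\ast}$ are all standard and correct. The only place to be careful is exactly the one you flag: in Hilbert $C^{\ast}$-modules the surjectivity of $T^{\ast}T$ requires the closed-range structure theorem (e.g.\ that closed range of $T$ implies $T^{\ast}$ has closed range and $\mathcal{H}=\ker T\oplus Rang(T^{\ast})$), not a naive orthogonal-complement argument; once that theorem is invoked your deduction $Rang(T^{\ast}T)=Rang(T^{\ast})=\mathcal{H}$ goes through.
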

\begin{lemma} \label{l4} \cite{33}.
	Let $(\Omega,\mu )$ be a measure space, $X$ and $Y$ are two Banach spaces, $\lambda : X\longrightarrow Y$ be a bounded linear operator and $f : \Omega\longrightarrow X$ measurable function; then, 
	\begin{equation*}
	\lambda (\int_{\Omega}fd\mu)=\int_{\Omega}(\lambda f)d\mu.
	\end{equation*}
\end{lemma}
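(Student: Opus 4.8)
The plan is to establish the identity first for simple functions, where it reduces to the linearity of $\lambda$, and then to extend it to an arbitrary Bochner-integrable $f$ by an approximation argument that leans on the boundedness (hence continuity) of $\lambda$.

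First I would treat the case of a simple function $f = \sum_{i=1}^{n} x_i \chi_{A_i}$, with $x_i \in X$ and $A_i \subseteq \Omega$ measurable of finite measure. By the definition of the integral of a simple function, $\int_{\Omega} f \, d\mu = \sum_{i=1}^{n} \mu(A_i) x_i$, so applying $\lambda$ and using its linearity gives $\lambda(\int_{\Omega} f \, d\mu) = \sum_{i=1}^{n} \mu(A_i) \lambda(x_i)$. Since $\lambda f = \sum_{i=1}^{n} \lambda(x_i) \chi_{A_i}$ is again simple, the right-hand side equals $\int_{\Omega} \lambda f \, d\mu$, which proves the claim in this case by finite additivity alone. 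Next, for a general measurable $f$ I would pick a sequence of simple functions $(f_n)$ with $\int_{\Omega} \|f_n - f\| \, d\mu \to 0$, which exists by the definition of Bochner integrability. Because $\|\lambda f_n - \lambda f\| \leq \|\lambda\| \, \|f_n - f\|$, the sequence $\lambda f_n$ converges to $\lambda f$ in the $L^1$ sense, so $\lambda f$ is integrable and $\int_{\Omega} \lambda f_n \, d\mu \to \int_{\Omega} \lambda f \, d\mu$. On the other hand, $\|\int_{\Omega} f_n \, d\mu - \int_{\Omega} f \, d\mu\| \leq \int_{\Omega} \|f_n - f\| \, d\mu \to 0$, and continuity of $\lambda$ then yields $\lambda(\int_{\Omega} f_n \, d\mu) \to \lambda(\int_{\Omega} f \, d\mu)$. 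Combining these two limits with the simple-function identity $\lambda(\int_{\Omega} f_n \, d\mu) = \int_{\Omega} \lambda f_n \, d\mu$ and letting $n \to \infty$ gives the desired equality.

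The one delicate point is the interchange of the limit with the integral: one must confirm both that $\lambda f$ is genuinely Bochner integrable and that the two limiting processes are legitimate. This is precisely where the boundedness of $\lambda$ does all the work, since it controls $\|\lambda f_n - \lambda f\|$ by $\|\lambda\|\,\|f_n - f\|$ and supplies the continuity needed to push $\lambda$ through the limit of the integrals. No measure-theoretic input beyond the standard existence of approximating simple functions is required.
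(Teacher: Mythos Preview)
Your argument is the standard and correct proof that bounded linear operators commute with the Bochner integral: verify the identity on simple functions by linearity, then pass to the limit using boundedness of $\lambda$ to control both the integrability of $\lambda f$ and the convergence of integrals. There is nothing to compare against, however: the paper does not prove this lemma at all but simply quotes it from Yosida's \emph{Functional Analysis} (reference \cite{33}) as a preliminary tool, so your proposal in fact supplies more than the paper does.
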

\section{Integral K-Operator Frames for $End_{\mathcal{A}}^{\ast}(\mathcal{H})$}
\quad
We began this section with the following definition.
\begin{definition}
	A family of adjointable operators $\{T_{w}\}_{w\in\Omega} \subset End_{\mathcal{A}}^{\ast}(\mathcal{H}) $ on a Hilbert $\mathcal{A}$-module $\mathcal{H}$ over a unital $C^{\ast}$-algebra is said to be an integral operator frame for $End_{\mathcal{A}}^{\ast}(\mathcal{H})$, if there exist two positive constants $A, B > 0$ such that 
	\begin{equation}\label{eq333}
		A\langle x,x\rangle_{\mathcal{A}} \leq\int_{\Omega}\langle T_{\omega}x,T_{\omega}x\rangle_{\mathcal{A}} d\mu(\omega)\leq B\langle x,x\rangle_{\mathcal{A}} , \quad x\in\mathcal{H}.
	\end{equation}

	If the sum in the middle of \eqref{eq333} is convergent in norm, the integral operator frame is called standard. 
\end{definition}

\begin{definition}
	Let $K\in End_{\mathcal{A}}^{\ast}(\mathcal{H})$ and $T=\{T_{\omega} \in End_{\mathcal{A}}^{\ast}(\mathcal{H}), \omega\in\Omega\} $. The family $T$ is said an integral $K$-operator frames for $End_{\mathcal{A}}^{\ast}(\mathcal{H})$, if there exist two positive constants $A, B > 0$ such that 
	\begin{equation}\label{eq3}
	A\langle K^{\ast}x,K^{\ast}x\rangle_{\mathcal{A}} \leq\int_{\Omega}\langle T_{\omega}x,T_{\omega}x\rangle_{\mathcal{A}} d\mu(\omega)\leq B\langle x,x\rangle_{\mathcal{A}} ,\quad  x\in\mathcal{H}.
	\end{equation}
	The numbers $A$ and $B$ are called respectively lower and upper bound of the integral $K$-operator frame.\\
	An integral $K$-operator frame $\{T_{\omega}\}_{\omega \in \Omega} \subset End_{\mathcal{A}}^{\ast}(\mathcal{H})$ is said to be $A$-tight if there exists a constant $0 < A$ such that,
\begin{equation*}
A\langle K^{\ast}x,K^{\ast}x\rangle_{\mathcal{A}}=\int_{\Omega} \langle T_{\omega}x,T_{\omega}x\rangle_{\mathcal{A}} d\mu(\omega), \quad x\in \mathcal{H}
\end{equation*}
If $A=1$, it is called a normalized tight integral $K$-operator frames or a Parseval integral $K$-operator frame. 
\end{definition}
	\begin{example}
	Let $\mathcal{H}$ be a Hilbert space defined by:\\
	$\mathcal{H}=\left\{ \left( 
	\begin{array}{ccc}
	\alpha & 0 & 0 \\ 
	0 & 0 & \beta 
	\end{array}%
	\right) \text{ / }\alpha, \beta\in 
	\mathbb{C}
	\right\} $
	and $\mathcal{A}=\left\{ \left( 
	\begin{array}{ccc}
	a & 0 \\ 
	0 & b 
	\end{array}%
	\right) \text{ / }a,b\in 
	\mathbb{C}
	\right\} $. \\
	It's clear that $\mathcal{H}$ is a Hilbert space and $\mathcal{H}\mathcal{A}\subset \mathcal{H}$.\\
	Furthermore, the $\mathcal{A}$-valued inner product,
	\[
	\begin{array}{ccc}
	\mathcal{H}\times \mathcal{H} & \longrightarrow  & \mathcal{A} \\ 
	(A,B) & \longrightarrow  & \langle A, B\rangle_{\mathcal{A}} = A^{t}\bar{B}%
	\end{array}%
	\]\\
	is sesquilinear and positive.\\
	If  
	$ A=\left( 
	\begin{array}{ccc}
	a_{1} & 0 &0  \\ 
	0 & 0 & a_{2} 
	\end{array}%
	\right)$ and 
	$B=\left( 
	\begin{array}{ccc}
	b_{1} & 0 & 0  \\ 
	0 & 0 & b_{2} 
	\end{array}%
	\right)$,\\
	 then, 
	\begin{equation*}
	\langle A,B\rangle_{\mathcal{A}}=\left( 
	\begin{array}{ccc}
	a_{1}\bar{b_{1}} & 0  \\ 
	0 &  a_{2}\bar{b_{2}} 
	\end{array}%
	\right)
	\end{equation*}
	Let $(\Omega=\left[ 0,1\right] ,d\lambda ) $ be a measure space, where $d\lambda  $ is a Lebesgue measure  restraint on the interval $\left[ 0,1\right] $.\\
	For all $w \in \left[ 0,1\right] $, we consider,
	\begin{align*}
	F : \Omega &\longrightarrow \mathcal{H}\\
	w&\longrightarrow F_{\omega}= \left(   \begin{array}{ccc}
	w & 0 & 0 \\ 
	0 & 0 & 0 
	\end{array}
	\right)
	\end{align*}
	$F$ is a measurable map and for all $A\in \mathcal{H}$, we have,
	\begin{align*}
	\int_{\Omega}\langle A,F_{\omega}\rangle_{\mathcal{A}}\langle F_{\omega},A\rangle_{\mathcal{A}}d\lambda(w)&=\int_{\Omega}\left( 
	\begin{array}{ccc}
	a_{1}\bar{w} & 0  \\ 
	0 &  0 
	\end{array}%
	\right)
	\left( 
	\begin{array}{ccc}
	w\bar{a_{1}} & 0  \\ 
	0 &  0 
	\end{array}%
	\right)d\lambda(w)\\
	&=\left( 
	\begin{array}{ccc}
	|a_{1}| & 0  \\ 
	0 &  0 
	\end{array}%
	\right)
	\left( 
	\begin{array}{ccc}
	\int_{\Omega}w^{2}d\lambda(w) & 0  \\ 
	0 &  0 
	\end{array}%
	\right)\\
	&=\frac{1}{3}\left( 
	\begin{array}{ccc}
	|a_{1}|^{2} & 0  \\ 
	0 &  0 
	\end{array}%
	\right)\\
	&\leq \frac{1}{3}\left( 
	\begin{array}{ccc}
	|a_{1}|^{2} & 0  \\ 
	0 &  |a_{2}|^{2} 
	\end{array}%
	\right)=\frac{1}{3}\langle A,A\rangle_{\mathcal{A}}.
	\end{align*}
	Wich show that $F$ is an integral Bessel sequence. But $F$ is not an integral operator frame for the Hilbert $\mathcal{A}$-modules. Indeed, just take $\left( \begin{array}{ccc}
	0 & 0 & 0  \\ 
	0 &  0 & a_{2} 
	\end{array}%
	\right) $ with $a_{2}\neq 0$.\\
	Consider now, 
	\begin{align*}
	K : \mathcal{H}\qquad  &\longrightarrow \qquad \mathcal{H}\\
	\left( 
	\begin{array}{ccc}
	a_{1} & 0 &0  \\ 
	0 &  0 &a_{2}
	\end{array}%
	\right)&\longrightarrow \left( 
	\begin{array}{ccc}
	a_{1} & 0 &0  \\ 
	0 &  0 & 0
	\end{array}%
	\right)
	\end{align*}
	$K$ is a linear, bounded and selfadjoint operator and we have for all $A\in \mathcal{H}$,
	\begin{equation*}
	\langle K^{\ast}A,K^{\ast}A\rangle_{\mathcal{A}}=\left( 
		\begin{array}{ccc}
		|a_{1}|^{2} & 0  \\ 
		0 &  0 
		\end{array}%
		\right)
	\end{equation*}
	So,
	\begin{equation*}
	\frac{1}{4}\langle K^{\ast}A,K^{\ast}A\rangle_{\mathcal{A}}\leq \int_{\Omega}\langle A,F_{\omega}\rangle_{\mathcal{A}}\langle F_{\omega},A\rangle_{\mathcal{A}}d\lambda(w)\leq \frac{1}{3}\langle A,A\rangle_{\mathcal{A}}.
	\end{equation*}

\end{example}
\begin{remark}
	Every integral operator frame is an integral $K$-operator frame, for any $K\in End_{\mathcal{A}}^{\ast}(\mathcal{H})$, $K\neq0$.\\
	 Indeed, if $\{T_{\omega}\}_{\omega\in\Omega}$ is an integral operator for $ End_{\mathcal{A}}^{\ast}(\mathcal{H})$with bounds $A$ and $B$. then
	\begin{equation*}
	 A\langle x,x\rangle_{\mathcal{A}}\leq\int_{\Omega} \langle T_{w}x,T_{\omega}x\rangle_{\mathcal{A}} d\mu(\omega)\leq B\langle x,x\rangle_{\mathcal{A}},\quad   x\in\mathcal{H}.
	\end{equation*}
	By lemma \ref{l1}, we have,
	\begin{equation*}
	A\|K\|^{-2}\langle K^{\ast}x,K^{\ast}x\rangle_{\mathcal{A}}\leq \int_{\Omega} \langle T_{\omega}x,T_{\omega}x\rangle_{\mathcal{A}} d\mu(\omega)\leq B\langle x,x\rangle_{\mathcal{A}}, \quad x\in\mathcal{H}.
	\end{equation*}
	Therefore the family $\{T_{\omega}\}_{\omega\in\Omega}$ is an integral $K$-operator frame with bounds $A\|K\|^{-2}$ and $B$.
\end{remark}
\begin{proposition}
	Let $K\in End_{\mathcal{A}}^{\ast}(\mathcal{H})$ and $\{T_{\omega}\}_{\omega\in\Omega}$ be an integral $K$-operator frame for $End_{\mathcal{A}}^{\ast}(\mathcal{H})$ with frame bounds $A$ and $B$. If $K$ is surjective then $\{T_{\omega}\}_{\omega\in\Omega}$ is an integral operator frame for $End_{\mathcal{A}}^{\ast}(\mathcal{H})$.
\end{proposition}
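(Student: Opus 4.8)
The plan is to leave the upper bound untouched and manufacture a new lower bound out of the surjectivity of $K$. Indeed, the right-hand inequality in \eqref{eq3}, namely $\int_{\Omega}\langle T_{\omega}x,T_{\omega}x\rangle_{\mathcal{A}}\,d\mu(\omega)\leq B\langle x,x\rangle_{\mathcal{A}}$, already has exactly the form demanded by the definition of an integral operator frame, so nothing has to be done with it. The entire task reduces to replacing the quantity $A\langle K^{\ast}x,K^{\ast}x\rangle_{\mathcal{A}}$ on the left of \eqref{eq3} by a genuine multiple of $\langle x,x\rangle_{\mathcal{A}}$.

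First I would rewrite the left-hand term by adjointability: since $\langle K^{\ast}x,K^{\ast}x\rangle_{\mathcal{A}}=\langle KK^{\ast}x,x\rangle_{\mathcal{A}}$, the lower-bound term is governed by the positive operator $KK^{\ast}$. Next, because $K$ is surjective, Lemma \ref{l3}(ii) applies and yields that $KK^{\ast}$ is invertible with the operator estimate $\|(KK^{\ast})^{-1}\|^{-1}\leq KK^{\ast}$. Evaluating this order relation on the element $x$ gives
\[
\|(KK^{\ast})^{-1}\|^{-1}\langle x,x\rangle_{\mathcal{A}}\leq\langle KK^{\ast}x,x\rangle_{\mathcal{A}}=\langle K^{\ast}x,K^{\ast}x\rangle_{\mathcal{A}},
\]
so that $\langle x,x\rangle_{\mathcal{A}}$ is dominated by $\langle K^{\ast}x,K^{\ast}x\rangle_{\mathcal{A}}$ up to the constant $\|(KK^{\ast})^{-1}\|$.

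Finally I would chain these facts together. Multiplying the last inequality by $A$ and combining it with the lower bound of \eqref{eq3} produces
\[
A\|(KK^{\ast})^{-1}\|^{-1}\langle x,x\rangle_{\mathcal{A}}\leq A\langle K^{\ast}x,K^{\ast}x\rangle_{\mathcal{A}}\leq\int_{\Omega}\langle T_{\omega}x,T_{\omega}x\rangle_{\mathcal{A}}\,d\mu(\omega),
\]
valid for every $x\in\mathcal{H}$. Together with the unchanged upper bound $B$, this exhibits $\{T_{\omega}\}_{\omega\in\Omega}$ as an integral operator frame for $End_{\mathcal{A}}^{\ast}(\mathcal{H})$ with bounds $A\|(KK^{\ast})^{-1}\|^{-1}$ and $B$.

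I do not anticipate any serious obstacle in this argument; the only point that requires a moment of care is the passage from the operator inequality $\|(KK^{\ast})^{-1}\|^{-1}\leq KK^{\ast}$ to the corresponding inequality between the $\mathcal{A}$-valued inner products. This is, however, precisely the meaning of the order relation on positive elements of $End_{\mathcal{A}}^{\ast}(\mathcal{H})$, so the step is immediate once the identity $\langle K^{\ast}x,K^{\ast}x\rangle_{\mathcal{A}}=\langle KK^{\ast}x,x\rangle_{\mathcal{A}}$ is in place.
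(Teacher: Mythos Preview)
Your argument is correct and follows the same route as the paper's proof: both use the surjectivity of $K$ to obtain a lower estimate $\langle K^{\ast}x,K^{\ast}x\rangle_{\mathcal{A}}\geq m\langle x,x\rangle_{\mathcal{A}}$ and then chain it with the lower bound of \eqref{eq3}. The only difference is cosmetic: the paper merely asserts the existence of such an $m>0$, whereas you make it explicit as $m=\|(KK^{\ast})^{-1}\|^{-1}$ via Lemma~\ref{l3}(ii).
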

\begin{proof}
	Since $K$ is surjective, there exists $m>0$ such that
	\begin{equation*}
	\langle K^{\ast}x,K^{\ast}x\rangle_{\mathcal{A}}\geq m\langle x,x\rangle_{\mathcal{A}},  x\in\mathcal{H}.
	\end{equation*}
	Also, since $\{T_{\omega}\}_{\omega\in\Omega}$ is an integral  $K$-operator frame for $End_{\mathcal{A}}^{\ast}(\mathcal{H})$, we have
	\begin{equation*}
	Am\langle x,x\rangle_{\mathcal{A}}\leq A\langle K^{\ast}x,K^{\ast}x\rangle_{\mathcal{A}}\leq\int_{\Omega}\langle T_{\omega}x,T_{\omega}x\rangle_{\mathcal{A}} d\mu(\omega)\leq B\langle x,x\rangle_{\mathcal{A}}, \quad x\in\mathcal{H}.
	\end{equation*}
	Hence $\{T_{\omega}\}_{\omega\in\Omega}$ is an integral operator frame for $End_{\mathcal{A}}^{\ast}(\mathcal{H})$ with frame bounds $Am$ and $B$.
\end{proof}
Let $\{T_{\omega}\}_{\omega\in\Omega}$ be an integral $K$-operator frame for $End_{\mathcal{A}}^{\ast}(\mathcal{H})$.\\
we define the operator $R$ by,
\begin{align*}
R:\mathcal{H}&\longrightarrow l^{2}(\Omega,\mathcal{H})\\
x&\longrightarrow Rx=\{T_{\omega}x\}_{\omega\in\Omega},  
\end{align*}
The operator $R$ is called the analysis operator of the integral $K$-operator frame $\{T_{\omega}\}_{\omega\in\Omega} $.\\
The adjoint of the analysis operator $R$ is defined by,
\begin{align*}
R^{\ast}:l^{2}(\Omega,\mathcal{H})&\longrightarrow \mathcal{H}\\
x&\longrightarrow R^{\ast}(\{x_{\omega}\}_{\omega\in\Omega})=\int_{\Omega}T_{\omega}^{\ast}x_{\omega}d\mu(\omega),  
\end{align*}
The operator $R^{\ast}$ is called the synthesis operator of the integral $K$-operator frame $ \{T_{\omega}\}_{\omega\in\Omega} $.\\
By composing $R$ and $R^{\ast}$, the frame operator $S_{T}:\mathcal{H}\rightarrow\mathcal{H}$ for the integral $K$-operator frame $T$ is given by
\begin{equation*}
S(x)=R^{\ast}Rx=\int_{\Omega}T_{\omega}^{\ast}T_{\omega}xd\mu(\omega)
\end{equation*} 

\begin{theorem}
	Let $K\in End_{\mathcal{A}}^{\ast}(\mathcal{H})$ and $\{T_{\omega}\}_{\omega\in\Omega}\subset End_{\mathcal{A}}^{\ast}(\mathcal{H})$. The following
	statements are equivalent:
	\begin{itemize}
		\item [(1)] $\{T_{\omega}\}_{\omega\in\Omega}$ is an integral $K$-operator frame for $End_{\mathcal{A}}^{\ast}(\mathcal{H})$.
		\item [(2)] There exists $A>0$ such that $ AKK^{\ast}\leq S$, where $S$ is the frame operator for $\{T_{\omega}\}_{\omega\in\Omega}$.
		\item[(3)] $K=S^{\frac{1}{2}}Q$, for some $Q\in End_{\mathcal{A}}^{\ast}(\mathcal{H})$.
	\end{itemize}
\end{theorem}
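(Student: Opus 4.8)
The plan is to exploit that the middle term of the defining inequality \eqref{eq3} is exactly the quadratic form of the frame operator $S$. First I would record the two identities $\langle K^{\ast}x,K^{\ast}x\rangle_{\mathcal{A}}=\langle KK^{\ast}x,x\rangle_{\mathcal{A}}$ and $\int_{\Omega}\langle T_{\omega}x,T_{\omega}x\rangle_{\mathcal{A}}\,d\mu(\omega)=\langle R^{\ast}Rx,x\rangle_{\mathcal{A}}=\langle Sx,x\rangle_{\mathcal{A}}$, the latter being immediate from $S=R^{\ast}R$ together with the definition of the inner product on $l^{2}(\Omega,\mathcal{H})$. With these identities the lower frame estimate in \eqref{eq3} reads $\langle AKK^{\ast}x,x\rangle_{\mathcal{A}}\leq\langle Sx,x\rangle_{\mathcal{A}}$ for every $x$, that is, $AKK^{\ast}\leq S$.

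For $(1)\Leftrightarrow(2)$: assuming $(1)$, the upper bound forces the analysis operator $R$ to be bounded, so that $S=R^{\ast}R$ is a well-defined positive element of $End_{\mathcal{A}}^{\ast}(\mathcal{H})$, and the lower bound gives $AKK^{\ast}\leq S$. Conversely, the phrase ``the frame operator $S$'' in $(2)$ already presupposes that $S$ exists as a bounded operator, which encodes the Bessel estimate $\langle Sx,x\rangle_{\mathcal{A}}\leq\|S\|\langle x,x\rangle_{\mathcal{A}}$; thus $B=\|S\|$ is an upper frame bound while $AKK^{\ast}\leq S$ supplies the lower one, recovering \eqref{eq3}.

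The equivalence $(2)\Leftrightarrow(3)$ is where Lemma \ref{l2} carries the load. As $S$ is positive and self-adjoint, $S^{\frac{1}{2}}$ is positive and self-adjoint, whence $(S^{\frac{1}{2}})^{\ast}=S^{\frac{1}{2}}$ and $S^{\frac{1}{2}}(S^{\frac{1}{2}})^{\ast}=S$. I would apply Lemma \ref{l2} with its ``$T$'' taken to be $K$ and its ``$S$'' taken to be $S^{\frac{1}{2}}$: condition (ii) there becomes $KK^{\ast}\leq\lambda\,S^{\frac{1}{2}}(S^{\frac{1}{2}})^{\ast}=\lambda S$, i.e. statement $(2)$ with $\lambda=A^{-1}$, and condition (iii) becomes $K=S^{\frac{1}{2}}Q$, i.e. statement $(3)$. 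The implication $(3)\Rightarrow(2)$ can in fact be done by hand with no range hypothesis: from $K=S^{\frac{1}{2}}Q$ and $QQ^{\ast}\leq\|Q\|^{2}I$ (Lemma \ref{l1} applied to $Q^{\ast}$) one gets $KK^{\ast}=S^{\frac{1}{2}}QQ^{\ast}S^{\frac{1}{2}}\leq\|Q\|^{2}S$, which is $(2)$ with $A=\|Q\|^{-2}$.

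The main obstacle is the reverse implication $(2)\Rightarrow(3)$, since Lemma \ref{l2} requires $Rang(S^{\frac{1}{2}})$ to be closed, a condition not stated among the hypotheses. The point I would have to justify carefully is precisely the applicability of Lemma \ref{l2} to $S^{\frac{1}{2}}$: one uses that $\overline{Rang(S^{\frac{1}{2}})}=\overline{Rang(S)}$ and that closedness of $Rang(S)$ passes to $Rang(S^{\frac{1}{2}})$, so that under the (tacit) assumption of closed range for $S$ the Douglas-type factorization of Lemma \ref{l2} applies and closes the chain $(1)\Leftrightarrow(2)\Leftrightarrow(3)$.
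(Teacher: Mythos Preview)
Your proof follows essentially the same route as the paper: rewrite the lower frame inequality as the operator inequality $AKK^{\ast}\leq S$ via $\langle K^{\ast}x,K^{\ast}x\rangle_{\mathcal{A}}=\langle KK^{\ast}x,x\rangle_{\mathcal{A}}$ and $\int_{\Omega}\langle T_{\omega}x,T_{\omega}x\rangle_{\mathcal{A}}\,d\mu(\omega)=\langle Sx,x\rangle_{\mathcal{A}}$, then invoke Lemma~\ref{l2} with $S^{1/2}$ in the role of the second operator to pass between $(2)$ and $(3)$. The paper organizes the argument as a cycle $(1)\Rightarrow(2)\Rightarrow(3)\Rightarrow(1)$ rather than your two biconditionals, but the substance is identical.

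Two places where you are in fact more careful than the paper. First, your direct computation $KK^{\ast}=S^{1/2}QQ^{\ast}S^{1/2}\leq\|Q\|^{2}S$ for $(3)\Rightarrow(2)$ bypasses Lemma~\ref{l2} entirely in that direction, whereas the paper invokes the lemma in both directions. Second, you correctly flag that the closed-range hypothesis on $S^{1/2}$ required by Lemma~\ref{l2} for $(2)\Rightarrow(3)$ is not among the stated assumptions. The paper's own proof has exactly this gap --- it applies Lemma~\ref{l2} without verifying closedness of $Rang(S^{1/2})$ --- so your reservation is a genuine observation about the statement as written, not a shortcoming of your argument relative to the paper's.
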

\begin{proof}
	$(1)\Rightarrow(2)$ Let $\{T_{\omega}\}_{\omega\in\Omega}$ be an integral $K$-operator frame for $End_{\mathcal{A}}^{\ast}(\mathcal{H})$ with frame
	bounds $A$, $B$ and frame operator $S$ if and only if
	\begin{displaymath}
	A\langle K^{\ast}x,K^{\ast}x\rangle_{\mathcal{A}}\leq\int_{\Omega}\langle T_{\omega}x,T_{\omega}x\rangle_{\mathcal{A}} d\mu(\omega)\leq B\langle x,x\rangle_{\mathcal{A}},\quad  x\in\mathcal{H}.
	\end{displaymath}
	Then
	\begin{displaymath}
	\langle AKK^{\ast}x,x\rangle_{\mathcal{A}}\leq\langle Sx,x\rangle_{\mathcal{A}}\leq\langle Bx,x\rangle_{\mathcal{A}},  \quad x\in\mathcal{H}.
	\end{displaymath}
	Hence 
	\begin{equation}
 AKK^{\ast} \leq S.
	\end{equation}	
	$(2)\Rightarrow(3)$ Suppose there exists $A>0$ such that $AKK^{\ast}\leq S$. \\
	Notice that
	\begin{equation*}
	AKK^{\ast}\leq (S^{\frac{1}{2}})(S^{\frac{1}{2}})^{\ast}
	\end{equation*}
	By Lemma \ref{l2} we have:\\
	 $K=S^{\frac{1}{2}}Q$, for some $Q\in End_{\mathcal{A}}^{\ast}(\mathcal{H})$.\\
	$(3)\Rightarrow(1)$ Suppose that $K=S^{\frac{1}{2}}Q$, for some $Q\in End_{\mathcal{A}}^{\ast}(\mathcal{H})$.\\
	Then by the Lemma \ref{l2}, there exists $A>0$ such that $AKK^{\ast}\leq (S^{\frac{1}{2}})(S^{\frac{1}{2}})^{\ast}$.\\ So,  $AKK^{\ast}\leq S$.\\
	Which shows that $\{T_{\omega}\}_{\omega\in \Omega}$ is an integral $K$-operator frame for $End_{\mathcal{A}}^{\ast}(\mathcal{H})$.
\end{proof}
\section{Some Constructions of Integral $K$-Operator frame}
\begin{theorem}
	Let $Q\in End_{\mathcal{A}}^{\ast}(\mathcal{H})$ and let  $\{T_{\omega}\}_{\omega\in\Omega}$ be an integral $K$-operator frame for $End_{\mathcal{A}}^{\ast}(\mathcal{H})$. Then
	$\{T_{\omega}Q\}_{\omega\in\Omega}$ is an integral $(Q^{\ast}K)$-operator frame for $End_{\mathcal{A}}^{\ast}(\mathcal{H})$.
\end{theorem}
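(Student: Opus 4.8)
The plan is to feed the vector $Qx$ into the defining inequality \eqref{eq3} for the original integral $K$-operator frame and then repackage both sides in terms of the operator $Q^{\ast}K$. The key algebraic observation is the adjoint identity $(Q^{\ast}K)^{\ast}=K^{\ast}Q$, which will let the lower bound fall out essentially for free. First I would start from the hypothesis that $\{T_{\omega}\}_{\omega\in\Omega}$ satisfies
\begin{equation*}
A\langle K^{\ast}y,K^{\ast}y\rangle_{\mathcal{A}}\leq\int_{\Omega}\langle T_{\omega}y,T_{\omega}y\rangle_{\mathcal{A}}\,d\mu(\omega)\leq B\langle y,y\rangle_{\mathcal{A}},\qquad y\in\mathcal{H},
\end{equation*}
and specialize to $y=Qx$, which is legitimate since $Q\in End_{\mathcal{A}}^{\ast}(\mathcal{H})$ maps $\mathcal{H}$ into $\mathcal{H}$.

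For the lower bound, the left-hand side becomes $A\langle K^{\ast}Qx,K^{\ast}Qx\rangle_{\mathcal{A}}$, and using $K^{\ast}Q=(Q^{\ast}K)^{\ast}$ this is exactly $A\langle (Q^{\ast}K)^{\ast}x,(Q^{\ast}K)^{\ast}x\rangle_{\mathcal{A}}$. Since the middle integral under the substitution reads $\int_{\Omega}\langle T_{\omega}Qx,T_{\omega}Qx\rangle_{\mathcal{A}}\,d\mu(\omega)$, I obtain the desired lower frame inequality with the same constant $A$ and no further work.

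For the upper bound, the substitution gives $\int_{\Omega}\langle T_{\omega}Qx,T_{\omega}Qx\rangle_{\mathcal{A}}\,d\mu(\omega)\leq B\langle Qx,Qx\rangle_{\mathcal{A}}$, so I still need to control $\langle Qx,Qx\rangle_{\mathcal{A}}$ in terms of $\langle x,x\rangle_{\mathcal{A}}$. This is precisely what Lemma \ref{l1} provides: $\langle Qx,Qx\rangle_{\mathcal{A}}\leq\|Q\|^{2}\langle x,x\rangle_{\mathcal{A}}$. Chaining the two estimates yields the upper bound with constant $B\|Q\|^{2}$. Assembling both halves shows that $\{T_{\omega}Q\}_{\omega\in\Omega}$ is an integral $(Q^{\ast}K)$-operator frame with bounds $A$ and $B\|Q\|^{2}$.

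There is no serious obstacle here; the proof is a direct substitution argument. The only point requiring a little care is the bookkeeping of adjoints, namely confirming that the operator whose frame property we are verifying is $Q^{\ast}K$ and that its adjoint is $K^{\ast}Q$, so that the specialized lower bound lines up with the definition of an integral $(Q^{\ast}K)$-operator frame. One should also note in passing that if $Q=0$ the statement is degenerate, so implicitly $Q\neq 0$ is assumed to keep the upper constant $B\|Q\|^{2}$ positive, matching the convention on nonzero operators used elsewhere in the paper.
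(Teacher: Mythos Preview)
Your proof is correct and follows essentially the same approach as the paper: substitute $y=Qx$ into the defining frame inequality, use $(Q^{\ast}K)^{\ast}=K^{\ast}Q$ for the lower bound, and invoke Lemma~\ref{l1} to pass from $\langle Qx,Qx\rangle_{\mathcal{A}}$ to $\|Q\|^{2}\langle x,x\rangle_{\mathcal{A}}$ for the upper bound, arriving at the same constants $A$ and $B\|Q\|^{2}$.
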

\begin{proof}
	Let $\{T_{\omega}\}_{\omega\in\Omega}$ be an integral $K$-operator frame for $End_{\mathcal{A}}^{\ast}(\mathcal{H})$ with frame
	bounds $A$ and $B$ if and only if,
	\begin{displaymath}
	A\langle K^{\ast}x,K^{\ast}x\rangle_{\mathcal{A}}\leq\int_{\Omega}\langle T_{\omega}x,T_{\omega}x\rangle_{\mathcal{A}} d\mu(\omega)\leq B\langle x,x\rangle_{\mathcal{A}}, \quad  x\in\mathcal{H}.
	\end{displaymath}
	This give,
	\begin{displaymath}
	A\langle K^{\ast}Qx,K^{\ast}Qx\rangle_{\mathcal{A}}\leq\int_{\Omega}\langle T_{\omega}Qx,T_{\omega}Qx\rangle_{\mathcal{A}} d\mu(\omega)\leq B\langle Qx,Qx\rangle_{\mathcal{A}}, \quad  x\in\mathcal{H}.
	\end{displaymath}
	So,
	\begin{displaymath}
	A\langle (Q^{\ast}K)^{\ast}x,(Q^{\ast}K)^{\ast}x\rangle_{\mathcal{A}}\leq\int_{\Omega}\langle T_{\omega}Qx,T_{\omega}Qx\rangle_{\mathcal{A}} d\mu(\omega)\leq B\|Q\|^{2}\langle x,x\rangle_{\mathcal{A}}, \quad  x\in\mathcal{H}.
	\end{displaymath}
which shows that $\{T_{\omega}Q\}_{\omega\in\Omega}$ is an integral $(Q^{\ast}K)$-operator frame for $End_{\mathcal{A}}^{\ast}(\mathcal{H})$ with bounds $A$ and $B\|Q\|^{2}$.
\end{proof}
\begin{theorem}
	Let $K\in End_{\mathcal{A}}^{\ast}(\mathcal{H})$ and let $\{T_{\omega}\}_{\omega\in\Omega}\subset End_{\mathcal{A}}^{\ast}(\mathcal{H})$ be a tight integral  $K$-operator frame for $End_{\mathcal{A}}^{\ast}(\mathcal{H})$ with frame bound $A_{1}$. Then $\{T_{\omega}\}_{\omega\in\Omega}$ is a tight integral operator frame for $End_{\mathcal{A}}^{\ast}(\mathcal{H})$ with frame bound $A_{2}$ if and only if $K_{r}^{-1}=\frac{A_{1}}{A_{2}}K^{\ast}$.
\end{theorem}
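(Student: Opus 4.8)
The plan is to reduce the stated equivalence to a single operator identity linking the frame operator $S(x)=\int_{\Omega}T_{\omega}^{\ast}T_{\omega}x\,d\mu(\omega)$, the operator $KK^{\ast}$, and the identity operator. First I would rewrite both notions of tightness at the level of $S$. Since $\langle Sx,x\rangle_{\mathcal{A}}=\int_{\Omega}\langle T_{\omega}x,T_{\omega}x\rangle_{\mathcal{A}}\,d\mu(\omega)$ and, as in the computation of $(1)\Rightarrow(2)$ earlier, $\langle K^{\ast}x,K^{\ast}x\rangle_{\mathcal{A}}=\langle KK^{\ast}x,x\rangle_{\mathcal{A}}$, the hypothesis that $\{T_{\omega}\}_{\omega\in\Omega}$ is a tight integral $K$-operator frame with bound $A_{1}$ reads
\[
A_{1}\langle KK^{\ast}x,x\rangle_{\mathcal{A}}=\langle Sx,x\rangle_{\mathcal{A}},\qquad x\in\mathcal{H}.
\]
The crucial step is to upgrade this equality of quadratic forms to the operator identity $S=A_{1}KK^{\ast}$. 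This is where I expect the only real obstacle: I must invoke that for self-adjoint $P,Q\in End_{\mathcal{A}}^{\ast}(\mathcal{H})$ the condition $\langle Px,x\rangle_{\mathcal{A}}=\langle Qx,x\rangle_{\mathcal{A}}$ for all $x$ forces $P=Q$. This follows from the polarization identity, which is available because the $\mathcal{A}$-valued inner product is sesquilinear; the quadratic form therefore determines the full $\mathcal{A}$-valued sesquilinear form, and hence the operator.

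For the forward implication ($\Rightarrow$), I would assume $\{T_{\omega}\}_{\omega\in\Omega}$ is a tight integral operator frame with bound $A_{2}$, which by the same polarization argument gives $S=A_{2}I$. Combining this with $S=A_{1}KK^{\ast}$ yields $A_{1}KK^{\ast}=A_{2}I$, so that
\[
K\left(\frac{A_{1}}{A_{2}}K^{\ast}\right)=\frac{A_{1}}{A_{2}}KK^{\ast}=I,
\]
which, by the definition of the right inverse recalled in the preliminaries, is precisely the assertion $K_{r}^{-1}=\frac{A_{1}}{A_{2}}K^{\ast}$.

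For the converse ($\Leftarrow$), I would start from $K_{r}^{-1}=\frac{A_{1}}{A_{2}}K^{\ast}$, i.e. $\frac{A_{1}}{A_{2}}KK^{\ast}=I$, and rewrite it as $A_{1}KK^{\ast}=A_{2}I$. Substituting into the identity $S=A_{1}KK^{\ast}$ obtained in the first step gives $S=A_{2}I$, and therefore
\[
\int_{\Omega}\langle T_{\omega}x,T_{\omega}x\rangle_{\mathcal{A}}\,d\mu(\omega)=\langle Sx,x\rangle_{\mathcal{A}}=A_{2}\langle x,x\rangle_{\mathcal{A}},\qquad x\in\mathcal{H},
\]
which is exactly the statement that $\{T_{\omega}\}_{\omega\in\Omega}$ is a tight integral operator frame for $End_{\mathcal{A}}^{\ast}(\mathcal{H})$ with bound $A_{2}$. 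Once the passage from quadratic forms to operators is secured, both directions are immediate algebraic rearrangements of $S=A_{1}KK^{\ast}$, so essentially all the content sits in that single identity.
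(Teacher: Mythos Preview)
Your proposal is correct and follows essentially the same line as the paper's own proof: equate the two expressions for $\int_{\Omega}\langle T_{\omega}x,T_{\omega}x\rangle_{\mathcal{A}}\,d\mu(\omega)$, pass from $\langle KK^{\ast}x,x\rangle_{\mathcal{A}}=\langle\frac{A_{2}}{A_{1}}x,x\rangle_{\mathcal{A}}$ to $KK^{\ast}=\frac{A_{2}}{A_{1}}I$, and read off the right inverse; the converse is the same computation reversed. The only difference is packaging: you route everything through the frame operator $S$ and are explicit about the polarization step that upgrades equality of quadratic forms to equality of self-adjoint operators, whereas the paper leaves that passage implicit.
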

\begin{proof}
	Let $\{T_{\omega}\}_{\omega\in\Omega}\subset End_{\mathcal{A}}^{\ast}(\mathcal{H})$ be a tight integral $K$-operator frame for $End_{\mathcal{A}}^{\ast}(\mathcal{H})$ with frame bound $A_{1}$, then 
	\begin{equation*}
\int_{\Omega}\langle T_{\omega}x,T_{\omega}x\rangle_{\mathcal{A}} d\mu(\omega)=A_{1}\langle K^{\ast}x,K^{\ast}x\rangle_{\mathcal{A}},\quad  x\in\mathcal{H};
	\end{equation*}
	Since $\{T_{\omega}\}_{\omega\in\Omega}$ is a tight integral operator frame for $End_{\mathcal{A}}^{\ast}(\mathcal{H})$ with frame bound $A_{2}$, then,
	\begin{displaymath}
	\int_{\Omega}\langle T_{\omega}x,T_{\omega}x\rangle_{\mathcal{A}} d\mu(\omega)=A_{2}\langle x,x\rangle_{\mathcal{A}},\quad  x\in\mathcal{H};
	\end{displaymath}
	We deduce that, for each $x\in\mathcal{H}$, we have
	\begin{equation*}
A_{1}\langle K^{\ast}x,K^{\ast}x\rangle_{\mathcal{A}}=A_{2}\langle x,x\rangle_{\mathcal{A}}
	\end{equation*} 
	So, 
	\begin{displaymath}
	\langle KK^{\ast}x,x\rangle_{\mathcal{A}}=\langle \frac{A_{2}}{A_{1}}x,x\rangle_{\mathcal{A}}, \quad  x\in\mathcal{H}.
	\end{displaymath}
	Then $KK^{\ast}=\frac{A_{2}}{A_{1}}I$, Hence $K_{r}^{-1}=\frac{A_{1}}{A_{2}}K^{\ast}$.\\
	Conversely, suppose that $K_{r}^{-1}=\frac{A_{1}}{A_{2}}K^{\ast}$. Then $KK^{\ast}=\frac{A_{2}}{A_{1}}I$. Thus
	\begin{displaymath}
	\langle KK^{\ast}x,x\rangle_{\mathcal{A}}=\langle \frac{A_{2}}{A_{1}}x,x\rangle_{\mathcal{A}},  \quad x\in\mathcal{H}.
	\end{displaymath}
	Since $\{T_{\omega}\}_{\omega\in \Omega}$ is a tight $K$-operator frame for $End_{\mathcal{A}}^{\ast}(\mathcal{H})$, we have
	\begin{displaymath}
	\int_{\Omega}\langle T_{\omega}x,T_{\omega}x\rangle_{\mathcal{A}} d\mu(\omega)=A_{2}\langle x,x\rangle_{\mathcal{A}}, \quad  x\in\mathcal{H}
	\end{displaymath}
which ends the proof.
\end{proof}
\begin{theorem}
	Let $\{T_{\omega}\}_{\omega\in\Omega}$ be an integral $K$-operator frame for $\mathcal{H}$ with best frame bounds $A$ and $B$. If $Q\in End_{\mathcal{A}}^{\ast}(\mathcal{H})$ be an adjointable and invertible operator such that $Q^{-1}K^{\ast}=K^{\ast}Q^{-1}$, then $\{T_{\omega}Q\}_{\omega\in\Omega}$ is an integral $K$-operator frame for $\mathcal{H}$ with best frame bounds $C$ and $D$ satisfying the inequalities
	\begin{equation}\label{eq7}
	A\|Q^{-1}\|^{-2}\leq C\leq A\|Q\|^{2} \;\; and \;\; B\|Q^{-1}\|^{-2}\leq D\leq B\|Q\|^{2}
	\end{equation}
\end{theorem}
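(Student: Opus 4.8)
The plan is to transfer the frame inequalities for $\{T_{\omega}\}_{\omega\in\Omega}$ to $\{T_{\omega}Q\}_{\omega\in\Omega}$ via the substitution $x\mapsto Qx$, and then to run the same argument backwards with $Q^{-1}$ in place of $Q$ so as to pin down the \emph{best} bounds from both sides. The first thing I would record is that the hypothesis $Q^{-1}K^{\ast}=K^{\ast}Q^{-1}$ upgrades to $K^{\ast}Q=QK^{\ast}$: multiplying $Q^{-1}K^{\ast}=K^{\ast}Q^{-1}$ on the left and on the right by $Q$ yields exactly this. Consequently $K^{\ast}Qx=QK^{\ast}x$ for every $x\in\mathcal{H}$, which is what lets me compare $\langle K^{\ast}Qx,K^{\ast}Qx\rangle_{\mathcal{A}}$ with $\langle K^{\ast}x,K^{\ast}x\rangle_{\mathcal{A}}$.

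Next I would substitute $Qx$ for $x$ in the defining inequality of $\{T_{\omega}\}_{\omega\in\Omega}$, obtaining
\[
A\langle K^{\ast}Qx,K^{\ast}Qx\rangle_{\mathcal{A}}\leq\int_{\Omega}\langle T_{\omega}Qx,T_{\omega}Qx\rangle_{\mathcal{A}}\,d\mu(\omega)\leq B\langle Qx,Qx\rangle_{\mathcal{A}}.
\]
For the upper estimate, Lemma \ref{l1} gives $\langle Qx,Qx\rangle_{\mathcal{A}}\leq\|Q\|^{2}\langle x,x\rangle_{\mathcal{A}}$, so the right-hand side is at most $B\|Q\|^{2}\langle x,x\rangle_{\mathcal{A}}$. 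For the lower estimate I would use $K^{\ast}Qx=QK^{\ast}x$ together with the inequality $\langle Qy,Qy\rangle_{\mathcal{A}}\geq\|Q^{-1}\|^{-2}\langle y,y\rangle_{\mathcal{A}}$; the latter follows from applying Lemma \ref{l1} to $Q^{-1}$ and the vector $Qy$, namely $\langle y,y\rangle_{\mathcal{A}}=\langle Q^{-1}(Qy),Q^{-1}(Qy)\rangle_{\mathcal{A}}\leq\|Q^{-1}\|^{2}\langle Qy,Qy\rangle_{\mathcal{A}}$. Taking $y=K^{\ast}x$ shows the left-hand side is at least $A\|Q^{-1}\|^{-2}\langle K^{\ast}x,K^{\ast}x\rangle_{\mathcal{A}}$. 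This proves $\{T_{\omega}Q\}_{\omega\in\Omega}$ is an integral $K$-operator frame with bounds $A\|Q^{-1}\|^{-2}$ and $B\|Q\|^{2}$, whence the best bounds satisfy $C\geq A\|Q^{-1}\|^{-2}$ and $D\leq B\|Q\|^{2}$.

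To obtain the two reverse inequalities $C\leq A\|Q\|^{2}$ and $D\geq B\|Q^{-1}\|^{-2}$, I would exploit that $\{T_{\omega}\}_{\omega\in\Omega}=\{(T_{\omega}Q)Q^{-1}\}_{\omega\in\Omega}$ and repeat the substitution argument with $Q^{-1}$ in the role of $Q$. Starting from the best-bound inequalities for $\{T_{\omega}Q\}_{\omega\in\Omega}$ and replacing $x$ by $Q^{-1}x$, the commutation $Q^{-1}K^{\ast}=K^{\ast}Q^{-1}$ and the same two norm estimates (now with the roles of $\|Q\|$ and $\|Q^{-1}\|$ interchanged, using $\|(Q^{-1})^{-1}\|=\|Q\|$) yield that $\{T_{\omega}\}_{\omega\in\Omega}$ is a $K$-operator frame with lower bound $C\|Q\|^{-2}$ and upper bound $D\|Q^{-1}\|^{2}$. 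Since $A$ and $B$ are the \emph{best} bounds of $\{T_{\omega}\}_{\omega\in\Omega}$ — $A$ the largest admissible lower bound and $B$ the smallest admissible upper bound — this forces $A\geq C\|Q\|^{-2}$ and $B\leq D\|Q^{-1}\|^{2}$, i.e. $C\leq A\|Q\|^{2}$ and $D\geq B\|Q^{-1}\|^{-2}$. Combining with the previous paragraph gives \eqref{eq7}.

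The only genuinely delicate point is the bookkeeping around \emph{best} bounds in the final step: one must use that $A,B$ are optimal (not merely some admissible pair) in order to convert the one-sided frame estimates for $\{T_{\omega}\}_{\omega\in\Omega}$ into the missing upper bound on $C$ and lower bound on $D$. Everything else reduces to the substitution $x\mapsto Qx$, the commutation relation, and two applications of Lemma \ref{l1}.
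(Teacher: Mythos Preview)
Your proof is correct and follows essentially the same route as the paper's: substitute $x\mapsto Qx$ in the original $K$-frame inequality, use Lemma~\ref{l1} (once for $Q$, once for $Q^{-1}$) together with the commutation relation to obtain the bounds $A\|Q^{-1}\|^{-2}$ and $B\|Q\|^{2}$, and then rerun the argument with $Q^{-1}$ in place of $Q$ and invoke optimality of $A,B$ to get the remaining two inequalities. The only cosmetic difference is that you first rewrite the hypothesis as $K^{\ast}Q=QK^{\ast}$, whereas the paper works directly with $Q^{-1}K^{\ast}=K^{\ast}Q^{-1}$; the computations are otherwise identical.
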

\begin{proof}
	Let $\{T_{\omega}\}_{\omega\in\Omega}$ be an integral $K$-operator frame for $\mathcal{H}$ with best frame bounds $A$ and $B$.\\
	On one hand, we have  for all $x\in \mathcal{H}$,
	
	\begin{displaymath}
	\int_{\Omega}\langle T_{\omega}Qx,T_{\omega}Qx\rangle_{\mathcal{A}} d\mu(\omega)\leq B\langle Qx,Qx\rangle_{\mathcal{A}}\leq B\|Q\|^{2}\langle x,x\rangle_{\mathcal{A}}.
	\end{displaymath}
	One other hand, we have for all $x\in \mathcal{H}$,
	\begin{align*}
	A\langle K^{\ast}x,K^{\ast}x\rangle_{\mathcal{A}}&=A\langle K^{\ast}Q^{-1}Qx,K^{\ast}Q^{-1}Qx\rangle_{\mathcal{A}}\\
	&=A\langle Q^{-1}K^{\ast}Qx,Q^{-1}K^{\ast}Qx\rangle_{\mathcal{A}}\\
	&\leq\|Q^{-1}\|^{2}\int_{\Omega}\langle T_{\omega}Qx,T_{\omega}Qx\rangle_{\mathcal{A}} d\mu(\omega).
	\end{align*}
	So, we conclude,
	\begin{equation*}
	A\|Q^{-1}\|^{-2}\langle K^{\ast}x,K^{\ast}x\rangle_{\mathcal{A}}\leq\int_{\Omega}\langle T_{\omega}Qx,T_{\omega}Qx\rangle_{\mathcal{A}} d\mu(\omega)\leq B\|Q\|^{2}\langle x,x\rangle_{\mathcal{A}}
	\end{equation*}
	Wich show that $\{T_{\omega}Q\}_{\omega\in\Omega}$ is an integral $K$-operator frame for $End_{\mathcal{A}}^{\ast}(\mathcal{H})$ with bounds $A\|Q^{-1}\|^{-2}$ and $B\|Q\|^{2}$.
	Now, let $E$ and $F$ be the best bounds of the integral $K$-operator frame $\{T_{\omega}Q\}_{\omega\in\Omega}$. Then
	\begin{equation}\label{eq8}
	A\|Q^{-1}\|^{-2}\leq E \;\; and \;\; F\leq B\|Q\|^{2}
	\end{equation}
	Since $\{T_{\omega}Q\}_{\omega\in\Omega}$ is an integral $K$-operator frame for $End_{\mathcal{A}}^{\ast}(\mathcal{H})$ with frame bounds $E$ and $F$ and
	\begin{equation*}
	\langle K^{\ast}x,K^{\ast}x\rangle_{\mathcal{A}}=\langle QQ^{-1}K^{\ast}x,QQ^{-1}K^{\ast}x\rangle_{\mathcal{A}}\leq\|Q\|^{2}\langle K^{\ast}Q^{-1}x,K^{\ast}Q^{-1}x\rangle_{\mathcal{A}}.
	\end{equation*}
	Hence
	\begin{align*}
	E\|Q\|^{-2}\langle K^{\ast}x,K^{\ast}x\rangle_{\mathcal{A}}&\leq E\langle K^{\ast}Q^{-1}x,K^{\ast}Q^{-1}x\rangle_{\mathcal{A}}\\
	&\leq \int_{\Omega}\langle T_{\omega}QQ^{-1}x,T_{\omega}QQ^{-1}x\rangle_{\mathcal{A}} d\mu(\omega)\\
	&=\int_{\Omega}\langle T_{\omega}x,T_{\omega}x\rangle_{\mathcal{A}} d\mu(\omega)\\
	&\leq F\|Q^{-1}\|^{2}\langle x,x\rangle_{\mathcal{A}}.
	\end{align*}
	Since $A$ and $B$ are the best bounds of integral $K$-operator frame $\{T_{\omega}\}_{\omega\in\Omega}$, we have
	\begin{equation}\label{eq9}
	E\|Q\|^{-2}\leq A \;\; and \;\; B\leq F\|Q^{-1}\|^{2}.
	\end{equation}
	Which ends the proof.
	\end{proof}
\section{Operator Preserving Integral $K$-Operator Frame}
\begin{proposition}
	Let $K,T\in End_{\mathcal{A}}^{\ast}(\mathcal{H})$ such that $R(T) \subset R(K)$ and $R(K)$ is closed. Let $\{T_{\omega}\}_{\omega\in\Omega}$  be an integral $K$-operator frame for $End_{\mathcal{A}}^{\ast}(\mathcal{H})$, Then $\{T_{\omega}\}_{\omega\in\Omega}$  is an integral $T$-operator frame for $End_{\mathcal{A}}^{\ast}(\mathcal{H})$.
\end{proposition}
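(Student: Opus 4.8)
The plan is to use Lemma \ref{l2} to convert the range inclusion $R(T)\subseteq R(K)$ into an operator inequality comparing $TT^{\ast}$ with $KK^{\ast}$, and then transport that inequality into the frame estimates. First I would apply Lemma \ref{l2} with the closed-range operator playing the role of $S$: since $R(K)$ is closed and $R(T)\subseteq R(K)$, the equivalence $(i)\Leftrightarrow(ii)$ with $S=K$ yields a constant $\lambda>0$ such that $TT^{\ast}\leq\lambda KK^{\ast}$.

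Next I would rewrite the two sides of the desired lower bound in diagonal form by means of the adjoint identity. For every $x\in\mathcal{H}$ one has $\langle T^{\ast}x,T^{\ast}x\rangle_{\mathcal{A}}=\langle TT^{\ast}x,x\rangle_{\mathcal{A}}$ and likewise $\langle K^{\ast}x,K^{\ast}x\rangle_{\mathcal{A}}=\langle KK^{\ast}x,x\rangle_{\mathcal{A}}$. Feeding the operator inequality from the first step into these expressions gives, for all $x\in\mathcal{H}$,
\[
\langle T^{\ast}x,T^{\ast}x\rangle_{\mathcal{A}}\leq\lambda\,\langle K^{\ast}x,K^{\ast}x\rangle_{\mathcal{A}},
\]
equivalently $\tfrac{1}{\lambda}\langle T^{\ast}x,T^{\ast}x\rangle_{\mathcal{A}}\leq\langle K^{\ast}x,K^{\ast}x\rangle_{\mathcal{A}}$.

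Finally I would chain this with the lower frame bound of the given integral $K$-operator frame. Multiplying by the lower bound $A>0$ and combining yields
\[
\frac{A}{\lambda}\langle T^{\ast}x,T^{\ast}x\rangle_{\mathcal{A}}\leq A\langle K^{\ast}x,K^{\ast}x\rangle_{\mathcal{A}}\leq\int_{\Omega}\langle T_{\omega}x,T_{\omega}x\rangle_{\mathcal{A}}\,d\mu(\omega),
\]
while the upper estimate $\int_{\Omega}\langle T_{\omega}x,T_{\omega}x\rangle_{\mathcal{A}}\,d\mu(\omega)\leq B\langle x,x\rangle_{\mathcal{A}}$ is inherited verbatim, since it does not involve $K$ at all. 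This exhibits $\{T_{\omega}\}_{\omega\in\Omega}$ as an integral $T$-operator frame with bounds $A/\lambda$ and $B$.

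There is no deep difficulty here, as the statement is essentially a repackaging of Lemma \ref{l2}. The only point that requires care is the correct bookkeeping of roles in that lemma: it is $K$, not $T$, that must be taken as the closed-range operator $S$, so that the inclusion $R(T)\subseteq R(K)$ produces $TT^{\ast}\leq\lambda KK^{\ast}$ (and not the reverse). The remaining passage between the operator inequality and the scalar-type inequality $\langle T^{\ast}x,T^{\ast}x\rangle_{\mathcal{A}}\leq\lambda\langle K^{\ast}x,K^{\ast}x\rangle_{\mathcal{A}}$ is routine once the adjoint identity is invoked.
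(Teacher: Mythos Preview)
Your proposal is correct and follows essentially the same route as the paper: invoke Lemma~\ref{l2} with $S=K$ to obtain $TT^{\ast}\leq\lambda KK^{\ast}$, translate this into $\tfrac{1}{\lambda}\langle T^{\ast}x,T^{\ast}x\rangle_{\mathcal{A}}\leq\langle K^{\ast}x,K^{\ast}x\rangle_{\mathcal{A}}$, and chain it with the given lower frame bound while keeping the upper bound unchanged. Your write-up is in fact slightly more explicit than the paper's in justifying the passage from the operator inequality to the inner-product inequality via $\langle T^{\ast}x,T^{\ast}x\rangle_{\mathcal{A}}=\langle TT^{\ast}x,x\rangle_{\mathcal{A}}$.
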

\begin{proof}
	Suppose that $\{T_{\omega}\}_{\omega\in\Omega}$ is an integral $K$-operator frame for $End_{\mathcal{A}}^{\ast}(\mathcal{H})$. Then there exist two positive constants A and B such that 
\begin{equation}\label{h1}
    A\langle K^{\ast}x,K^{\ast}x\rangle_{\mathcal{A}}\leq\int_{\Omega}\langle T_{\omega}x,T_{\omega}x\rangle_{\mathcal{A}} d\mu(\omega)\leq B\langle x,x\rangle_{\mathcal{A}}, \quad  x\in\mathcal{H}.
\end{equation}
   From lemma \ref{l2}, there exist $0<\lambda$ such that 
    $$TT^{\ast}\leq \lambda KK^{\ast}.$$
    Which give,
    $$\frac{A}{\lambda}\langle T^{\ast}x,T^{\ast}x\rangle_{\mathcal{A}}\leq A \langle K^{\ast}x,K^{\ast}x\rangle_{\mathcal{A}}\leq \int_{\Omega}\langle T_{\omega}x,T_{\omega}x\rangle_{\mathcal{A}} d\mu(\omega) \leq B\langle x,x\rangle_{\mathcal{A}}, \quad  x\in\mathcal{H}.$$
    Hence $\{T_{\omega}\}_{\omega\in\Omega}$ is an integral $T$-operator frame for $End_{\mathcal{A}}^{\ast}(\mathcal{H})$. 
\end{proof}
\begin{theorem}
	Let $K\in End_{\mathcal{A}}^{\ast}(\mathcal{H})$ with a dense range. Let $\{T _{\omega}\}_{\omega\in\Omega}$ be an integral $K$-operator frame for $End_{\mathcal{A}}^{\ast}(\mathcal{H})$ and $T\in End_{\mathcal{A}}^{\ast}(\mathcal{H})$ have closed range and commute with $T_{\omega}$ for each $\omega \in \Omega$. If $\{TT_{\omega}\}_{\omega\in\Omega}$ is an integral $K$-operator frame for $End_{\mathcal{A}}^{\ast}(\mathcal{H})$ then T is surjective.
\end{theorem}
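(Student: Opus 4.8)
The plan is to compress the two frame hypotheses into a single operator inequality relating $KK^{\ast}$ to $T$, and then to feed that inequality into Lemma \ref{l2} in order to obtain a range inclusion for $K$; the density of $R(K)$ together with the closedness of $R(T)$ would then upgrade the inclusion to an equality and give surjectivity. First I would fix frame bounds $A,B$ for $\{T_{\omega}\}_{\omega\in\Omega}$ and $C,D$ for $\{TT_{\omega}\}_{\omega\in\Omega}$. The decisive structural input is the commutation $TT_{\omega}=T_{\omega}T$, which lets me rewrite, for every $x\in\mathcal{H}$,
\begin{equation*}
\int_{\Omega}\langle TT_{\omega}x,TT_{\omega}x\rangle_{\mathcal{A}}\,d\mu(\omega)=\int_{\Omega}\langle T_{\omega}(Tx),T_{\omega}(Tx)\rangle_{\mathcal{A}}\,d\mu(\omega).
\end{equation*}
Applying the lower bound of $\{TT_{\omega}\}_{\omega\in\Omega}$ at $x$ and the upper bound of $\{T_{\omega}\}_{\omega\in\Omega}$ at the vector $Tx$, this sandwich gives
\begin{equation*}
C\langle K^{\ast}x,K^{\ast}x\rangle_{\mathcal{A}}\leq\int_{\Omega}\langle T_{\omega}(Tx),T_{\omega}(Tx)\rangle_{\mathcal{A}}\,d\mu(\omega)\leq B\langle Tx,Tx\rangle_{\mathcal{A}},
\end{equation*}
which, after passing to the associated positive operators, is the operator inequality $C\,KK^{\ast}\leq B\,T^{\ast}T$. (Equivalently, one can argue through frame operators: commutativity yields $S'=T^{\ast}ST$ for the two frame operators, and $S\leq BI$, $S'\geq CKK^{\ast}$ give the same estimate.)

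Next I would bring in Lemma \ref{l2}. Since $T$ has closed range, so does $T^{\ast}$, and the inequality $C\,KK^{\ast}\leq B\,T^{\ast}T$ already has the admissible shape for a range comparison; the same commutation, used to deduce injectivity of $T$, shows that $\langle K^{\ast}x,K^{\ast}x\rangle_{\mathcal{A}}=0$ forces $x\in\ker T$, and density of $R(K)$ (so that $\ker K^{\ast}=\{0\}$) forces $\ker T=\{0\}$. Thus $T$ is injective with closed range, hence bounded below, and by Lemma \ref{l3} the product $T^{\ast}T$ is invertible. The goal is then to convert this quantitative lower control of $T$ into the statement $R(T)=\mathcal{H}$: I would use Lemma \ref{l2} to produce a range inclusion of the form $R(K)\subseteq R(T)$, and then invoke that $R(T)$ is closed while $R(K)$ is dense, so the inclusion yields $R(T)=\mathcal{H}$, i.e. $T$ is surjective; one may also phrase the final step as showing $TT^{\ast}$ invertible and reading off a right inverse of $T$ via Lemma \ref{l3}(ii).

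The main obstacle is precisely the matching of factors in the last step. The combination carried out above produces an inequality against $T^{\ast}T$, so Lemma \ref{l2} most directly controls the range of $T^{\ast}$, whereas surjectivity of $T$ is governed by $TT^{\ast}$. The crux of the proof is therefore to arrange the range inclusion as $R(K)\subseteq R(T)$ (equivalently, to compare $KK^{\ast}$ with $TT^{\ast}$ rather than with $T^{\ast}T$), and this is where I would concentrate the work: exploiting the commutation $TT_{\omega}=T_{\omega}T$ (and its adjoint form $T_{\omega}^{\ast}T^{\ast}=T^{\ast}T_{\omega}^{\ast}$), the invertibility of $T^{\ast}T$, and the orthogonal splittings $\mathcal{H}=\ker T\oplus R(T^{\ast})$ and $\mathcal{H}=\ker T^{\ast}\oplus R(T)$ available for an adjointable operator with closed range, so that density of $R(K)$ eliminates $\ker T^{\ast}$ and leaves $R(T)=\mathcal{H}$. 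Making that transfer airtight is the step on which the whole argument turns.
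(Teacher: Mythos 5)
Your computations are sound as far as they go: the commutation $TT_{\omega}=T_{\omega}T$ plus the two frame bounds does give, for all $x\in\mathcal{H}$,
\begin{equation*}
C\langle K^{\ast}x,K^{\ast}x\rangle_{\mathcal{A}}\leq\int_{\Omega}\langle T_{\omega}Tx,T_{\omega}Tx\rangle_{\mathcal{A}}\,d\mu(\omega)\leq B\langle Tx,Tx\rangle_{\mathcal{A}},
\end{equation*}
i.e. $C\,KK^{\ast}\leq B\,T^{\ast}T=B\,T^{\ast}(T^{\ast})^{\ast}$; since $R(T)$ closed implies $R(T^{\ast})$ closed, Lemma \ref{l2} then yields $R(K)\subseteq R(T^{\ast})$, and density of $R(K)$ makes $T^{\ast}$ surjective, equivalently $T$ injective and bounded below. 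But the step you yourself single out as the crux --- transferring this to $R(K)\subseteq R(T)$, i.e. comparing $KK^{\ast}$ with $TT^{\ast}$ rather than with $T^{\ast}T$ --- is a genuine gap, and no amount of work with the splittings $\mathcal{H}=N(T)\oplus R(T^{\ast})$ and $\mathcal{H}=N(T^{\ast})\oplus R(T)$ will close it, because the hypotheses simply do not control $TT^{\ast}$: the statement as written is false. Take $\mathcal{A}=\mathbb{C}$, $\mathcal{H}=\ell^{2}$, $\Omega$ a single atom of mass $1$, $K=I$ (dense range), $T_{\omega}=I$, and $T=S$ the unilateral shift. Then $\{T_{\omega}\}$ is an integral $K$-operator frame with $A=B=1$, $T$ has closed range and commutes with $T_{\omega}$, and $\{TT_{\omega}\}=\{S\}$ is a Parseval integral $K$-operator frame because $\langle Sx,Sx\rangle=\langle x,x\rangle$; yet $S$ is not surjective. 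Injectivity of $T$ with closed range is the most your (correct) estimates can deliver.

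You should also know that the paper's own proof founders on exactly the point you flagged. It asserts that the lower frame bound for $\{TT_{\omega}\}$ gives $N(T^{\ast})\subset N(K^{\ast})$, whence $R(T)=N(T^{\ast})^{\perp}=\mathcal{H}$. But the middle term $\int_{\Omega}\langle TT_{\omega}x,TT_{\omega}x\rangle_{\mathcal{A}}\,d\mu(\omega)$ contains no occurrence of $T^{\ast}x$: what the inequality (combined with $TT_{\omega}=T_{\omega}T$) actually shows is that $Tx=0$ forces $K^{\ast}x=0$, i.e. $N(T)\subset N(K^{\ast})$, hence injectivity of $T$ --- the same conclusion you reached by the operator-inequality route. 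The paper silently performs the illegitimate swap of $T$ for $T^{\ast}$ that you correctly refused to make; your observation that ``making that transfer airtight is the step on which the whole argument turns'' is precisely right, except that the transfer is impossible, and the unilateral shift witnesses that the theorem itself, not merely your proof of it, is what fails.
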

\begin{proof}
	Assume that $\{TT_{\omega}\}_{\omega\in\Omega}$ is an integral $K$-operator frame for $End_{\mathcal{A}}^{\ast}(\mathcal{H})$ with bounds A and B, then 
\begin{equation}\label{h2}
    A\langle K^{\ast}x,K^{\ast}x\rangle_{\mathcal{A}}\leq\int_{\Omega}\langle TT_{\omega}x,TT_{\omega}x\rangle_{\mathcal{A}} d\mu(\omega)\leq B\langle x,x\rangle_{\mathcal{A}},\quad   x\in\mathcal{H}.
\end{equation}
	Since K is dense range, then $K^{\ast}$ is injective.\\
	 By \eqref{h2}, $T^{\ast}$ is injective since, $N(T^{\ast})\subset N(K^{\ast})$.\\
	Moreover, $R(T)=N(T^{\ast})^{\perp}=\mathcal{H}$, which show that $T$ is surjective.
\end{proof}
\begin{theorem}
Let $K,T\in End_{\mathcal{A}}^{\ast}(\mathcal{H})$ and let $\{T _{\omega}\}_{\omega\in\Omega}$ be an integral $K$-operator frame for $End_{\mathcal{A}}^{\ast}(\mathcal{H})$. If $T$ has closed range and commutes with $K^{\ast}$ and commue with $T_{\omega}$ for each $\omega \in \Omega$, then $\{TT _{w}\}_{\omega\in\Omega}$ is an integral $K$-operator frame for $R(T)$.
\end{theorem}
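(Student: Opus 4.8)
The plan is to verify the two frame inequalities for the family $\{TT_{\omega}\}_{\omega\in\Omega}$ directly on the submodule $R(T)$, exploiting the two commutation relations to transport everything back to the known $K$-operator frame $\{T_{\omega}\}_{\omega\in\Omega}$. First I would record that each $TT_{\omega}$ is again adjointable (with adjoint $T_{\omega}^{\ast}T^{\ast}$) and that $\omega\mapsto TT_{\omega}x$ is measurable, so the middle integral is well defined. The starting identity is that, since $TT_{\omega}=T_{\omega}T$, one has $\langle TT_{\omega}x,TT_{\omega}x\rangle_{\mathcal{A}}=\langle T_{\omega}(Tx),T_{\omega}(Tx)\rangle_{\mathcal{A}}$, so that integrating gives $\int_{\Omega}\langle TT_{\omega}x,TT_{\omega}x\rangle_{\mathcal{A}}\,d\mu(\omega)=\int_{\Omega}\langle T_{\omega}(Tx),T_{\omega}(Tx)\rangle_{\mathcal{A}}\,d\mu(\omega)$. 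From here I can feed the vector $Tx$ into the frame inequality \eqref{eq3} for $\{T_{\omega}\}_{\omega\in\Omega}$.

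For the upper bound this is immediate: the right-hand inequality applied at $Tx$, together with Lemma \ref{l1}, yields $\int_{\Omega}\langle TT_{\omega}x,TT_{\omega}x\rangle_{\mathcal{A}}\,d\mu(\omega)\leq B\langle Tx,Tx\rangle_{\mathcal{A}}\leq B\|T\|^{2}\langle x,x\rangle_{\mathcal{A}}$, so $D=B\|T\|^{2}$ works for every $x\in\mathcal{H}$, in particular on $R(T)$.

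The lower bound is where the hypotheses on $T$ enter. The left-hand inequality at $Tx$ gives $\int_{\Omega}\langle TT_{\omega}x,TT_{\omega}x\rangle_{\mathcal{A}}\,d\mu(\omega)\geq A\langle K^{\ast}Tx,K^{\ast}Tx\rangle_{\mathcal{A}}$, and the commutation $K^{\ast}T=TK^{\ast}$ rewrites the right side as $A\langle TK^{\ast}x,TK^{\ast}x\rangle_{\mathcal{A}}$. Thus I need a reverse estimate of the form $\langle Tz,Tz\rangle_{\mathcal{A}}\geq c\langle z,z\rangle_{\mathcal{A}}$ applied to $z=K^{\ast}x$. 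This is exactly what the closed-range assumption is for: because $R(T)$ is closed, $T$ exhibits a bounded-below behaviour on the reduced module $N(T)^{\perp}=\overline{R(T^{\ast})}$, which one extracts from Lemma \ref{l3} by viewing $T$ as a surjection of $\mathcal{H}$ onto $R(T)$. Moreover, since $K^{\ast}$ commutes with $T$ it leaves $R(T)$ invariant, so for $x\in R(T)$ the vector $K^{\ast}x$ again lies in $R(T)$, and the estimate may be applied with a single constant $c>0$, producing the lower bound $C=Ac$.

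The main obstacle is precisely this bounded-below step and the matching of subspaces: closed range furnishes the inequality $\langle Tz,Tz\rangle_{\mathcal{A}}\geq c\langle z,z\rangle_{\mathcal{A}}$ on $N(T)^{\perp}=\overline{R(T^{\ast})}$, whereas the commutation relation naturally keeps $K^{\ast}x$ inside $R(T)$; reconciling the two (for instance by transporting the estimate through the Moore--Penrose inverse $T^{\dagger}$, or in the self-adjoint/normal case where $R(T)=R(T^{\ast})$) is the delicate point on which the argument turns. Once the constant $c$ is secured, assembling $C\langle K^{\ast}x,K^{\ast}x\rangle_{\mathcal{A}}\leq\int_{\Omega}\langle TT_{\omega}x,TT_{\omega}x\rangle_{\mathcal{A}}\,d\mu(\omega)\leq D\langle x,x\rangle_{\mathcal{A}}$ for all $x\in R(T)$ completes the proof that $\{TT_{\omega}\}_{\omega\in\Omega}$ is an integral $K$-operator frame for $R(T)$.
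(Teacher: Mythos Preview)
Your approach is essentially the paper's: rewrite $TT_{\omega}x=T_{\omega}(Tx)$, feed $Tx$ into the frame inequality, use $K^{\ast}T=TK^{\ast}$, and then seek a reverse bound $\langle TK^{\ast}x,TK^{\ast}x\rangle_{\mathcal{A}}\geq c\langle K^{\ast}x,K^{\ast}x\rangle_{\mathcal{A}}$. The paper resolves your ``delicate point'' exactly via the Moore--Penrose inverse option you mention, writing $K^{\ast}x=T^{\dagger}TK^{\ast}x$ and applying Lemma~\ref{l1} to $T^{\dagger}$ to obtain $c=\|T^{\dagger}\|^{-2}$, so that the final bounds are $A\|T^{\dagger}\|^{-2}$ and $B\|T\|^{2}$; note that the paper simply asserts $T^{\dagger}T=I$ without restricting the subspace, so the subspace-matching issue you flag is not addressed there either.
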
 
\begin{proof}
let $\{T _{\omega}\}_{\omega\in\Omega}$ be an integral $K$-operator frame with bounds $A$,$B$.\\
If $T$ has closed range, it has the pseudo-inverse $T^{\dagger}$ such that $T^{\dagger}T=I$.\\
So, we have for all $x\in R(T)$,
\begin{equation*}
\langle K^{\ast}x,K^{\ast}x\rangle_{\mathcal{A}} =\langle T^{\dagger}TK^{\ast}x,T^{\dagger}TK^{\ast}x\rangle_{\mathcal{A}}\leq \|T^{\dagger}\|^{2}\langle TK^{\ast}x,TK^{\ast}x\rangle_{\mathcal{A}}
\end{equation*}
So,
\begin{equation*}
\|T^{\dagger}\|^{-2}\langle K^{\ast}x,K^{\ast}x\rangle_{\mathcal{A}}\leq \langle TK^{\ast}x,TK^{\ast}x\rangle_{\mathcal{A}}.
\end{equation*}
On one hand, for each $x\in R(T)$, we have,
\begin{align*}
\int_{\Omega}\langle TT_{\omega}x,TT_{\omega}x\rangle_{\mathcal{A}} d\mu(\omega)=\int_{\Omega}\langle T_{\omega}Tx,T_{\omega}Tx\rangle_{\mathcal{A}} d\mu(\omega)&\geq A\langle K^{\ast}Tx,K^{\ast}Tx\rangle_{\mathcal{A}}\\
&=A\langle TK^{\ast}x,TK^{\ast}x\rangle_{\mathcal{A}}\\
&\geq A\|T^{\dagger}\|^{-2}\langle K^{\ast}x,K^{\ast}x\rangle_{\mathcal{A}}
\end{align*}
One other hand, we have,
\begin{align*}
\int_{\Omega}\langle TT_{\omega}x,TT_{\omega}x\rangle_{\mathcal{A}} d\mu(\omega)=\int_{\Omega}\langle T_{\omega}Tx,T_{\omega}Tx\rangle_{\mathcal{A}} d\mu(\omega)&\leq B\langle Tx,Tx\rangle_{\mathcal{A}}\\
&=B\|T\|^{2}\langle x,x\rangle_{\mathcal{A}}\\
\end{align*}
which shows that $\{TT _{\omega}\}_{\omega\in\Omega}$ is an integral $K$-operator frame for $R(T)$ with bounds $A\|T^{\dagger}\|^{-2}$ and $B\|T\|^{2}$.
\end{proof} 

	\section{Perturbation of Integral $K$-Operator Frames}
In this section we consider perturbation of an integral $K$-operator frames by non-zero operators. 
\begin{theorem}
	Let $K \in End_{\mathcal{A}}^{\ast}(\mathcal{H})$ and $\{T_{\omega}\}_{\omega \in \Omega}$ be an integral $K$-operator frame for $End_{\mathcal{A}}^{\ast}(\mathcal{H})$ with frames bounds $A$ and $B$. Let $ L \in End_{\mathcal{A}}^{\ast}(\mathcal{H}), (L\neq 0)$, and $\{a_{\omega}\}_{\omega \in \Omega}$ any family of scalars. Then the perturbed family of operator $\{T_{\omega} + a_{\omega}LK^{\ast}\}_{\omega \in \Omega}$ is an integral $K$-operator frames for $End_{\mathcal{A}}^{\ast}(\mathcal{H})$ if $\int_{\Omega}|a_{\omega}|^{2}d\mu(\omega) < \frac{A}{\|L\|}$.
\end{theorem}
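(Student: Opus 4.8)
The plan is to establish the two inequalities of \eqref{eq3} for the perturbed family $S_\omega:=T_\omega+a_\omega LK^\ast$ by controlling the cross terms at the level of the $\mathcal{A}$-valued inner product, rather than by passing to scalar norms: a norm estimate would only compare the numbers $\|\cdot\|$ and would not deliver the \emph{operator} inequalities that \eqref{eq3} requires. The elementary device I would use is that, for all $u,v\in\mathcal{H}$ and every real $\lambda>0$,
\[
(1-\lambda)\langle u,u\rangle_{\mathcal{A}}+(1-\lambda^{-1})\langle v,v\rangle_{\mathcal{A}}\leq\langle u+v,u+v\rangle_{\mathcal{A}}\leq(1+\lambda)\langle u,u\rangle_{\mathcal{A}}+(1+\lambda^{-1})\langle v,v\rangle_{\mathcal{A}},
\]
both halves being immediate from the positivity $\langle\sqrt{\lambda}\,u\pm\lambda^{-1/2}v,\ \sqrt{\lambda}\,u\pm\lambda^{-1/2}v\rangle_{\mathcal{A}}\geq 0$.

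First I would apply this with $u=T_\omega x$ and $v=a_\omega LK^\ast x$, so that $u+v=S_\omega x$, and integrate the resulting operator inequalities over $\Omega$ (the $\mathcal{A}$-valued integral being linear and order-preserving on positive integrands). For the perturbation term I would use that the $a_\omega$ are scalars to pull $\int_\Omega|a_\omega|^2\,d\mu(\omega)=:c^2$ out of the integral, and then Lemma \ref{l1} to estimate
\[
\int_\Omega\langle a_\omega LK^\ast x,a_\omega LK^\ast x\rangle_{\mathcal{A}}\,d\mu(\omega)=c^2\langle LK^\ast x,LK^\ast x\rangle_{\mathcal{A}}\leq c^2\|L\|^2\langle K^\ast x,K^\ast x\rangle_{\mathcal{A}}.
\]
Combining with the original frame bounds $A,B$ for $\int_\Omega\langle T_\omega x,T_\omega x\rangle_{\mathcal{A}}\,d\mu(\omega)$ then yields both a lower and an upper estimate for $\int_\Omega\langle S_\omega x,S_\omega x\rangle_{\mathcal{A}}\,d\mu(\omega)$ in terms of $\langle K^\ast x,K^\ast x\rangle_{\mathcal{A}}$ and $\langle x,x\rangle_{\mathcal{A}}$, the passage from the former to the latter being handled by $\langle K^\ast x,K^\ast x\rangle_{\mathcal{A}}\leq\|K\|^2\langle x,x\rangle_{\mathcal{A}}$ (again Lemma \ref{l1}).

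For the upper bound this is already finished for any fixed $\lambda>0$, producing a finite constant $B'=(1+\lambda)B+(1+\lambda^{-1})c^2\|L\|^2\|K\|^2$. For the lower bound the coefficient $1-\lambda^{-1}$ is negative, so the perturbation estimate must be inserted with its inequality reversed; the net lower constant then comes out as $(1-\lambda)\big(A-\lambda^{-1}c^2\|L\|^2\big)$, and the whole point is to choose $\lambda\in(0,1)$ making this strictly positive. The smallness hypothesis on $\int_\Omega|a_\omega|^2\,d\mu(\omega)$ is precisely what leaves a nonempty admissible range of such $\lambda$: once $c^2\|L\|^2<A$ one may take any $\lambda\in(c^2\|L\|^2/A,\,1)$, which keeps the lower frame bound positive.

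\textbf{Main obstacle.} The genuinely delicate point is this sign bookkeeping in the lower estimate: because $1-\lambda^{-1}<0$, one must reverse the perturbation inequality before substituting and then verify that the hypothesis leaves room to pick $\lambda\in(0,1)$ keeping the resulting constant strictly positive. By contrast, the measurability of $\{S_\omega\}_{\omega\in\Omega}$, the extraction of the scalar $c^2$ from the integral, and the entire upper estimate are routine.
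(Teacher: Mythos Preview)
Your argument is correct and in fact cleaner than the paper's. The paper proceeds via the triangle inequality for the norm in $l^{2}(\Omega,\mathcal{H})$: writing $\|\{S_\omega x\}\|\geq\|\{T_\omega x\}\|-\|\{a_\omega LK^{\ast}x\}\|$ and squaring, it obtains the lower constant $(\sqrt{A}-\sqrt{R})^{2}$ with $R=\|L\|^{2}\int_{\Omega}|a_\omega|^{2}\,d\mu(\omega)$, and analogously the upper constant $(\sqrt{B}+\sqrt{R}\,\|K\|)^{2}$. Your parametrized inequality does the same work directly at the $\mathcal{A}$-valued level, and at the optimal choice $\lambda=\sqrt{R/A}$ your lower bound $(1-\lambda)(A-\lambda^{-1}R)$ collapses exactly to the paper's $(\sqrt{A}-\sqrt{R})^{2}$. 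The advantage of your route is precisely the one you flag: the paper silently passes from a scalar norm inequality in $l^{2}(\Omega,\mathcal{H})$ back to the $\mathcal{A}$-valued operator inequality \eqref{eq3}, a step that needs justification in the $C^{\ast}$-module setting, whereas your Young-type estimate never leaves the ordered $C^{\ast}$-algebra. Both arguments yield the threshold $R<A$, i.e.\ $\int_{\Omega}|a_\omega|^{2}\,d\mu(\omega)<A/\|L\|^{2}$; the exponent in the stated hypothesis $A/\|L\|$ is a slip that the paper's own computation of $R$ also exposes.
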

\begin{proof}
	Let $\Gamma_{\omega}= T_{\omega} + a_{\omega}LK^{\ast}$,  for all $\omega \in \Omega$. Then for all $x\in \mathcal{H}$, we have,
	\begin{align*}
	\int_{\Omega}\langle T_{\omega}x-\Gamma_{\omega}x,T_{\omega}x-\Gamma_{\omega}x\rangle_{\mathcal{A}} d\mu(\omega)&=\int_{\Omega}\langle a_{\omega}LK^{\ast}x,a_{\omega}LK^{\ast}x\rangle_{\mathcal{A}} d\mu(\omega),\\
	&\leq \int_{\Omega}|a_{\omega}|^{2}\|L\|^{2}\|K^{\ast}\|^{2}\langle x,x\rangle_{\mathcal{A}} d\mu(\omega)\\
	&=\int_{\Omega}|a_{\omega}|^{2}\|L\|^{2}\|K^{\ast}\|^{2}\langle x,x\rangle_{\mathcal{A}} d\mu(\omega),\\
	&\leq R\|K^{\ast}\|^{2}\langle x,x\rangle_{\mathcal{A}}. \qquad where \qquad  R=\int_{\Omega}|a_{\omega}|^{2}\|L\|^{2}d\mu(\omega).
	\end{align*}
	On one hand, for all $x\in H$,we have, 
	\begin{align*}
	(\int_{\Omega}\langle (T_{\omega} + a_{\omega}LK^{\ast})x,(T_{\omega} + a_{\omega}LK^{\ast})x\rangle_{\mathcal{A}} d\mu(\omega))^{\frac{1}{2}}&=\|(T_{\omega} + a_{\omega}LK^{\ast})x\|_{l^{2}(\Omega,H)}\\
	&\leq \|T_{\omega}x\|_{l^{2}(\Omega,H)} + \| a_{\omega}LK^{\ast}x\|_{l^{2}(\Omega,H)}\\
	&= (\int_{\Omega}\langle T_{\omega}x,T_{\omega}x\rangle_{\mathcal{A}} d\mu(\omega))^{\frac{1}{2}}  \\
	&  + (\int_{\Omega}\langle (a_{\omega}LK^{\ast})x,(a_{\omega}LK^{\ast})x\rangle_{\mathcal{A}} d\mu(\omega))^{\frac{1}{2}}\\
	&\leq \sqrt{B}\langle x,x\rangle_{\mathcal{A}} +  \sqrt{R}\|K^{\ast}\|\langle x,x\rangle_{\mathcal{A}}\\
	&\leq (\sqrt{B} + \sqrt{R}\|K^{\ast}\|)\langle x,x\rangle_{\mathcal{A}}.
	\end{align*}
	Then 
	\begin{equation}\label{123}
	\int_{\Omega}\langle (T_{\omega} + a_{\omega}LK^{\ast})x,(T_{\omega} + a_{\omega}LK^{\ast})x\rangle_{\mathcal{A}} d\mu(\omega)\leq (\sqrt{B} + \sqrt{R}\|K^{\ast}\|)^{2}\langle x,x\rangle_{\mathcal{A}}.
	\end{equation}
	On other hand, for all $x\in H$, we have,
	\begin{align*}
	(\int_{\Omega}\langle (T_{\omega} + a_{\omega}LK^{\ast})x,(T_{\omega} + a_{\omega}LK^{\ast})x\rangle_{\mathcal{A}} d\mu(\omega))^{\frac{1}{2}}&=\|(T_{\omega} + a_{\omega}LK^{\ast})x\|_{l^{2}(\Omega,\mathcal{H})}\\
	&\geq \|T_{\omega}x\|_{l^{2}(\Omega,\mathcal{H})} - \| a_{\omega}LK^{\ast}x\|_{l^{2}(\Omega,\mathcal{H})}\\
	&\geq (\int_{\Omega}\langle T_{\omega}x,T_{\omega}x\rangle_{\mathcal{A}} d\mu(\omega))^{\frac{1}{2}} \\
	& \textcolor{white}{.....} - (\int_{\Omega}\langle a_{\omega}LK^{\ast}x,a_{\omega}LK^{\ast}x\rangle_{\mathcal{A}} d\mu(\omega))^{\frac{1}{2}}\\
	&\geq \sqrt{A}\langle K^{\ast}x, K^{\ast}x\rangle_{\mathcal{A}} -  \sqrt{R}\langle K^{\ast}x, K^{\ast}x\rangle_{\mathcal{A}} \\
	&\geq (\sqrt{A} -  \sqrt{R})\langle K^{\ast}x, K^{\ast}x\rangle_{\mathcal{A}} 
	\end{align*}
	So,
	\begin{equation}\label{124}
	\int_{\Omega}\langle (T_{\omega} + a_{\omega}LK^{\ast})x,(T_{\omega} + a_{\omega}LK^{\ast})x\rangle_{\mathcal{A}} d\mu(\omega) \geq  (\sqrt{A} -  \sqrt{R})^{2}\langle K^{\ast}x, K^{\ast}x\rangle_{\mathcal{A}} 
	\end{equation}
	From \eqref{123} and \eqref{124} we conclude that  $\{T_{\omega} + a_{\omega}LK^{\ast}\}_{\omega \in \Omega}$ is an integral $K$-operator frame for $End_{\mathcal{A}}^{\ast}(\mathcal{H})$ if $R<A$, that is , if :
	\begin{equation*}
	\int_{\Omega}|a_{\omega}|^{2}d\mu(\omega) < \frac{A}{\|L\|}.
	\end{equation*}
	
\end{proof}
\begin{theorem}
	Let $K \in End_{\mathcal{A}}^{\ast}(\mathcal{H})$ and $\{T_{\omega}\}_{\omega \in \Omega}$ be an integral $K$-operator frame for $End_{\mathcal{A}}^{\ast}(\mathcal{H})$. Let $\{\Gamma_{\omega}\}_{\omega \in \Omega}$ be any family on $End_{\mathcal{A}}^{\ast}(\mathcal{H})$, and $\{a_{\omega}\}_{\omega \in \Omega},\, \{b_{\omega}\}_{\omega \in \Omega} \subset \mathbb{R}$ be two positively  confined sequences. If there exists a constants $\alpha , \beta$  with $0\leq \alpha , \beta<\frac{1}{2}$ such that,
	\begin{align}\label{1000}
	\int_{\Omega}\langle a_{\omega}T_{\omega}x - b_{\omega}\Gamma_{\omega}x,a_{\omega}T_{\omega}x - b_{\omega}\Gamma_{\omega}x\rangle_{\mathcal{A}} d\mu(\omega)&\leq \alpha\int_{\Omega}\langle a_{\omega}T_{\omega}x,a_{\omega}T_{\omega}x\rangle_{\mathcal{A}} d\mu(\omega) \\
	&\textcolor{white}{.....} + \beta\int_{\Omega}\langle b_{\omega}\Gamma_{\omega}x,b_{\omega}\Gamma_{\omega}x\rangle_{\mathcal{A}} d\mu(\omega).
	\end{align}
	Then $\{\Gamma_{\omega}\}_{\omega \in \Omega}$ is an integral $K$-operator frame for $End_{\mathcal{A}}^{\ast}(\mathcal{H})$.
\end{theorem}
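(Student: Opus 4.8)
The plan is to argue entirely at the level of $\mathcal{A}$-valued order inequalities, since the conclusion asserts two-sided bounds in $\mathcal{A}^{+}$ rather than in norm. Fix $x\in\mathcal{H}$ and abbreviate the two elements of $l^{2}(\Omega,\mathcal{H})$ by $u=\{a_{\omega}T_{\omega}x\}_{\omega\in\Omega}$ and $v=\{b_{\omega}\Gamma_{\omega}x\}_{\omega\in\Omega}$, so that $\langle u,u\rangle_{\mathcal{A}}=\int_{\Omega}\langle a_{\omega}T_{\omega}x,a_{\omega}T_{\omega}x\rangle_{\mathcal{A}}\,d\mu(\omega)$ and similarly for $v$, and the hypothesis \eqref{1000} reads $\langle u-v,u-v\rangle_{\mathcal{A}}\leq\alpha\langle u,u\rangle_{\mathcal{A}}+\beta\langle v,v\rangle_{\mathcal{A}}$. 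Because $\{a_{\omega}\}$ and $\{b_{\omega}\}$ are positively confined, say $0<m_{a}\leq a_{\omega}\leq M_{a}$ and $0<m_{b}\leq b_{\omega}\leq M_{b}$, integrating the pointwise inequalities $m_{a}^{2}\langle T_{\omega}x,T_{\omega}x\rangle_{\mathcal{A}}\leq a_{\omega}^{2}\langle T_{\omega}x,T_{\omega}x\rangle_{\mathcal{A}}\leq M_{a}^{2}\langle T_{\omega}x,T_{\omega}x\rangle_{\mathcal{A}}$ (and the analogue for $v$) shows that it suffices to compare $\langle v,v\rangle_{\mathcal{A}}$ with $\langle u,u\rangle_{\mathcal{A}}$ from both sides; the $K$-frame bounds of $\{T_{\omega}\}$ will then supply the final bounds through $m_{a}^{2}A\,\langle K^{\ast}x,K^{\ast}x\rangle_{\mathcal{A}}\leq\langle u,u\rangle_{\mathcal{A}}\leq M_{a}^{2}B\,\langle x,x\rangle_{\mathcal{A}}$.

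The central step is to expand the hypothesis, obtaining $(1-\alpha)\langle u,u\rangle_{\mathcal{A}}+(1-\beta)\langle v,v\rangle_{\mathcal{A}}\leq\langle u,v\rangle_{\mathcal{A}}+\langle v,u\rangle_{\mathcal{A}}$, and then to dominate the cross term by the parametrized $\mathcal{A}$-valued estimate $\langle u,v\rangle_{\mathcal{A}}+\langle v,u\rangle_{\mathcal{A}}\leq t\langle u,u\rangle_{\mathcal{A}}+t^{-1}\langle v,v\rangle_{\mathcal{A}}$, valid for every $t>0$ since it is exactly the positivity of $\langle\sqrt{t}\,u-t^{-1/2}v,\sqrt{t}\,u-t^{-1/2}v\rangle_{\mathcal{A}}$ (the real scalars avoiding any conjugation issue). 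Feeding a parameter $t\in(0,1-\alpha)$ into this estimate isolates $\langle v,v\rangle_{\mathcal{A}}$ on the right and yields a lower comparison $\langle v,v\rangle_{\mathcal{A}}\geq c_{1}\langle u,u\rangle_{\mathcal{A}}$, while a parameter $s>(1-\beta)^{-1}$ isolates $\langle v,v\rangle_{\mathcal{A}}$ on the left and yields an upper comparison $\langle v,v\rangle_{\mathcal{A}}\leq c_{2}\langle u,u\rangle_{\mathcal{A}}$; the hypothesis $0\leq\alpha,\beta<\tfrac12$ (in fact $<1$ would suffice) guarantees that the coefficients $c_{1}=\frac{1-\alpha-t}{t^{-1}-1+\beta}$ and $c_{2}=\frac{s-1+\alpha}{1-\beta-s^{-1}}$ are strictly positive.

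Assembling these, I would chain the inequalities in the two directions. For the lower frame bound, $\int_{\Omega}\langle\Gamma_{\omega}x,\Gamma_{\omega}x\rangle_{\mathcal{A}}\,d\mu(\omega)\geq M_{b}^{-2}\langle v,v\rangle_{\mathcal{A}}\geq M_{b}^{-2}c_{1}\langle u,u\rangle_{\mathcal{A}}\geq M_{b}^{-2}c_{1}m_{a}^{2}A\,\langle K^{\ast}x,K^{\ast}x\rangle_{\mathcal{A}}$; for the upper one, $\int_{\Omega}\langle\Gamma_{\omega}x,\Gamma_{\omega}x\rangle_{\mathcal{A}}\,d\mu(\omega)\leq m_{b}^{-2}\langle v,v\rangle_{\mathcal{A}}\leq m_{b}^{-2}c_{2}\langle u,u\rangle_{\mathcal{A}}\leq m_{b}^{-2}c_{2}M_{a}^{2}B\,\langle x,x\rangle_{\mathcal{A}}$. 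This exhibits $\{\Gamma_{\omega}\}_{\omega\in\Omega}$ as an integral $K$-operator frame with explicit bounds $A'=M_{b}^{-2}c_{1}m_{a}^{2}A$ and $B'=m_{b}^{-2}c_{2}M_{a}^{2}B$.

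The main obstacle, and the reason I avoid the triangle-inequality route used in the previous perturbation theorem, is that those arguments live in the scalar norm of $l^{2}(\Omega,\mathcal{H})$, whereas the definition of an integral $K$-operator frame is phrased through the order of $\mathcal{A}$; the $\mathcal{A}$-valued norm $\langle\cdot,\cdot\rangle_{\mathcal{A}}^{1/2}$ does not satisfy a usable triangle inequality. The parametrized positivity estimate for the cross term is precisely what replaces the triangle inequality and keeps every step inside $\mathcal{A}^{+}$; once it is in place, the remaining work—expanding, choosing $t$ and $s$, and combining with the confinement constants and the $K$-frame bounds—is routine.
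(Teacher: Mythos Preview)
Your argument is correct and stays entirely within the $\mathcal{A}$-valued order, as it should. The paper's proof rests on the same positivity-of-squares mechanism but packages it differently: instead of expanding the hypothesis and bounding the cross term $\langle u,v\rangle_{\mathcal{A}}+\langle v,u\rangle_{\mathcal{A}}$ by your parametrized estimate $t\langle u,u\rangle_{\mathcal{A}}+t^{-1}\langle v,v\rangle_{\mathcal{A}}$, it applies the fixed inequality $\langle v,v\rangle_{\mathcal{A}}\le 2\langle u,u\rangle_{\mathcal{A}}+2\langle u-v,u-v\rangle_{\mathcal{A}}$ (i.e.\ positivity of $\langle 2u-v,2u-v\rangle_{\mathcal{A}}$) and then substitutes the hypothesis for $\langle u-v,u-v\rangle_{\mathcal{A}}$ directly; this produces the constants $\frac{2(1+\alpha)}{1-2\beta}$ and $\frac{1-2\alpha}{2(1+\beta)}$, which is why the paper needs $\alpha,\beta<\tfrac12$. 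Your one-parameter family is the natural generalisation and, as you note, shows that $\alpha,\beta<1$ already suffices. Your caution about the triangle-inequality route is also well placed: the paper's write-up in fact drifts between $\mathcal{A}$-valued inner products and scalar norms $\|\cdot\|^{2}$ in the second half, whereas your version keeps all inequalities in $\mathcal{A}^{+}$ throughout.
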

\begin{proof}
	Suppose \eqref{1000} holds for some conditions of theorem.\\
	Then for all $x\in H$ we have,
	\begin{align*}
	\int_{\Omega}\langle b_{\omega}\Gamma_{\omega}x,b_{\omega}\Gamma_{\omega}x\rangle_{\mathcal{A}} d\mu(\omega)&\leq 2(\int_{\Omega}\langle a_{\omega}T_{\omega}x,a_{\omega}T_{\omega}x\rangle_{\mathcal{A}} d\mu(\omega)\\
	&\textcolor{white}{.....} +\int_{\Omega}\langle a_{\omega}T_{\omega}x - b_{\omega}\Gamma_{\omega}x,a_{\omega}T_{\omega}x - b_{\omega}\Gamma_{\omega}x\rangle_{\mathcal{A}} d\mu(\omega) ) \\
	&\leq 2(\int_{\Omega}\langle a_{\omega}T_{\omega}x,a_{\omega}T_{\omega}x\rangle_{\mathcal{A}} d\mu(\omega) +\alpha\int_{\Omega}\langle a_{\omega}T_{\omega}x,a_{\omega}T_{\omega}x\rangle_{\mathcal{A}} d\mu(\omega)\\
	&\textcolor{white}{.....} + \beta\int_{\Omega}\langle b_{\omega}\Gamma_{\omega}x, b_{\omega}\Gamma_{\omega}x\rangle_{\mathcal{A}} d\mu(\omega)).
	\end{align*}
	Therefore, 
	\begin{equation*}
	(1-2\beta)\int_{\Omega}\langle b_{\omega}\Gamma_{\omega}x, b_{\omega}\Gamma_{\omega}x\rangle_{\mathcal{A}} d\mu(\omega))\leq 2(1+\alpha)\int_{\Omega}\langle a_{\omega}T_{\omega}x,a_{\omega}T_{\omega}x\rangle_{\mathcal{A}} d\mu(\omega).
	\end{equation*}
	This give,
	\begin{equation*}
	(1-2\beta)[\underset{\omega \in \Omega}{\inf}( b_{\omega})]^{2}\int_{\Omega}\langle \Gamma_{\omega}x,\Gamma_{\omega}x\rangle_{\mathcal{A}} d\mu(\omega)\leq 2(1+\alpha)[\underset{\omega \in \Omega}{\sup}( a_{\omega})]^{2}\int_{\Omega}\langle T_{\omega}x,T_{\omega}x\rangle_{\mathcal{A}} d\mu(\omega).
	\end{equation*}
	Thus,
	\begin{equation}\label{101}
	\int_{\Omega}\langle \Gamma_{\omega}x,\Gamma_{\omega}x\rangle_{\mathcal{A}} d\mu(\omega)\leq \frac{ 2(1+\alpha)[\underset{\omega \in \Omega}{\sup}( a_{\omega})]^{2}}{(1-2\beta)[\underset{\omega \in \Omega}{\inf}( b_{\omega})]^{2}}\int_{\Omega}\langle T_{\omega}x,T_{\omega}x\rangle_{\mathcal{A}} d\mu(\omega).
	\end{equation}
	Also, for all $ x\in H$,
	\begin{align*}
	\int_{\Omega}\langle a_{\omega}T_{\omega}x,a_{\omega}T_{\omega}x\rangle_{\mathcal{A}}d\mu(\omega)&\leq 2(\int_{\Omega}\|a_{\omega}T_{\omega}x - b_{\omega}\Gamma_{\omega}x\|^{2}d\mu(\omega) + \int_{\Omega}\|b_{\omega}\Gamma_{\omega}x\|^{2}d\mu(\omega))\\
	&\leq 2(\alpha\int_{\Omega}\|a_{\omega}T_{\omega}x\|^{2}d\mu(\omega) + \beta\int_{\Omega}\|b_{\omega}\Gamma_{\omega}x\|^{2}d\mu(\omega)+\int_{\Omega}\|b_{\omega}\Gamma_{\omega}x\|^{2}d\mu(\omega)).
	\end{align*}
	Therefore,
	\begin{equation*}
	(1-2\alpha)[\underset{\omega \in \Omega}{\inf}( a_{\omega})]^{2}\int_{\Omega}\|T_{\omega}x\|^{2}d\mu(\omega)\leq 2(1+\beta)[\underset{\omega \in \Omega}{\sup}( b_{\omega})]^{2}\int_{\Omega}\|\Gamma_{\omega}x\|^{2}d\mu(\omega).
	\end{equation*}
	This give:
	\begin{equation}\label{102}
	\frac{(1-2\alpha)[\underset{\omega \in \Omega}{\inf}( a_{\omega})]^{2}}{2(1+\beta)[\underset{\omega \in \Omega}{\sup}( b_{\omega})]^{2}}\int_{\Omega}\|T_{\omega}x\|^{2}d\mu(\omega)\leq \int_{\Omega}\|\Gamma_{\omega}x\|^{2}d\mu(\omega)
	\end{equation}
	From \eqref{101} and \eqref{102} we conclude,
	\begin{equation*}
	\frac{(1-2\alpha)[\underset{\omega \in \Omega}{\inf}( a_{\omega})]^{2}}{2(1+\beta)[\underset{\omega \in \Omega}{\sup}(
		 b_{\omega})]^{2}}\int_{\Omega}\|T_{\omega}x\|^{2}d\mu(\omega)\leq \int_{\Omega}\langle \Gamma_{\omega}x,\Gamma_{\omega}x\rangle_{\mathcal{A}} \mu(\omega)\leq \frac{ 2(1+\alpha)[\underset{\omega \in \Omega}{\sup}( a_{\omega})]^{2}}{(1-2\beta)[\underset{\omega \in \Omega}{\inf}( b_{\omega})]^{2}}\int_{\Omega}\|T_{\omega}x\|^{2}d\mu(\omega).
	\end{equation*}
	Hence, $\{\Gamma_{\omega}\}_{\omega\in\Omega}$ is an integral $K$-operator frame for $End_{\mathcal{A}}^{\ast}(\mathcal{H})$.
\end{proof}

\end{document}